\begin{document}
\frontmatter
\title{Kodaira-Saito vanishing for the~irregular~Hodge filtration}

\author[C.~Sabbah]{Claude Sabbah}
\address{CMLS, CNRS, École polytechnique, Institut Polytechnique de Paris, 91128 Palaiseau cedex, France}
\email{Claude.Sabbah@polytechnique.edu}
\urladdr{https://perso.pages.math.cnrs.fr/users/claude.sabbah}

\subjclass{14F10, 32S40, 34M40}
\keywords{}

\begin{abstract}
After making correct, and then improving, our definition of the category of irregular mixed Hodge modules thanks to Mochizuki's recent results, we show how these results allow us to obtain Kodaira-Saito-type vanishing theorems for the irregular Hodge filtration of irregular mixed Hodge modules.
\end{abstract}

\maketitle
{\let\\\xspace\tableofcontents}
\mainmatter

\section{Introduction}
\subsection{Corrigenda to \cite{Bibi15}}
In the memoir \cite{Bibi15}, we have defined the category $\IrrMHM(X)$ of irregular mixed Hodge modules on a complex manifold $X$, and we have proved various properties of this category. However, the definition of the category $\IrrMHM$ in \loccit\ has a flaw, as was noticed by T.\,Mochizuki, because the operation of rescaling that is used does not preserve coherence. In order to preserve coherence, one has to restrict the family of objects in order to ensure an algebraicity property in the twistor parameter $\hb$. We make the definition of \cite{Bibi15} correct in~Section \ref{subsec:IrrMHMprovisional}.

\subsection{Application of the results of \cite{Mochizuki21} to $\IrrMHM$}
On the other hand, the results of T.\,Mochizuki in \cite{Mochizuki21} allow us to improve the definition of $\IrrMHM$ and to avoid the above restriction of objects. We explain this improvement and the consequences of the results of \cite{Mochizuki21} to $\IrrMHM$ in Section \ref{subsec:resultsMochizuki}.

\subsection{An irregular Kodaira-Saito vanishing theorem}
Let $X$ be a smooth projective variety of dimension $n$. Let $\cT$ be an object of $\IrrMHM(X)$ (\eg with respect to the improved definition of Section \ref{subsec:improved}) and let $\ccM$ be the underlying holonomic (left) $\ccD_X$-module. Then $\ccM$ can be equipped in a canonical way with a coherent filtration indexed by $A+\ZZ$ for some finite set $A\subset[0,1)$, called the \emph{irregular Hodge filtration} and denoted by $F^\irr_\bbullet\ccM$. Given $\alpha\in A$, we~denote by $F^\irr_{\alpha+\ZZ}\ccM$ the corresponding $\ZZ$\nobreakdash-indexed filtration. The shifted holomorphic de~Rham complex $\pDR\ccM$ is thus filtered by setting
\[\let\to\ra
F^\irr_{\alpha+p}\pDR\ccM=\{0\to F^\irr_{\alpha+p}\ccM\to\Omega^1_X\otimes F^\irr_{\alpha+p+1}\ccM\to\cdots\to\underset{\cbbullet}{\Omega^n_X\otimes F^\irr_{\alpha+p+n}\ccM}\to0\},
\]
where $\cbbullet$ indicates the term in degree zero. We will show how the results of T.\,Mochizuki \cite{Mochizuki21}, combined with the original proof of M.\,Saito \cite{MSaito87},\footnote{One can also refer to \cite{Popa16,P-Sch11,P-Sch14,Schnell16,Schnell19} for proofs and applications. We will follow the proof given in \cite[\S11.9]{S-Sch}.} lead to the following vanishing result and its corollaries:

\begin{theoreme}[Kodaira-Saito vanishing]\label{th:vanishing}
Let $L$ be an ample line bundle on $X$ and let $\cT$, $\ccM$ and $A$ be as above. Then, for each $\alpha\in A$, we~have
\begin{align*}
H^i(X,\gr^{F^\irr_{\alpha}}\pDR(\ccM)\otimes L)&=0\quad \text{for }i>0,\\
H^i(X,\gr^{F^\irr_{\alpha}}\pDR(\ccM)\otimes L^{-1})&=0\quad \text{for }i<0.
\end{align*}
\end{theoreme}

\begin{corollaire}\label{cor:vanishing3}
For $\cT$, $\ccM$ and $A$ as above, let $a_o\in A+\ZZ$ be such that $F^\irr_{<a_o}\ccM=0$, and let us set $\omega_X=\Omega^n_X$. Then we have the vanishing
\[
H^k(X,\omega_X\otimes F^\irr_a(\ccM)\otimes L)=0\quad \forall k>0,\text{ and }\forall a\in[a_o,a_o+1).
\]
\end{corollaire}

Note that we can replace $F^\irr_a(\ccM)$ by $\gr^{F^\irr}_a(\ccM)$ in this corollary. We also have the analogue of Kollár's vanishing theorem:

\begin{corollaire}[Kollár vanishing for the irregular Hodge filtration]\label{cor:vanishing4}
Let $\cT$, $\ccM$, $A$ and $a_o$ be as in Corollary \ref{cor:vanishing3}. Let $f:X\to Y$ be a projective morphism to a smooth projective variety $Y$ and let $L$ be an ample line bundle on $Y$. Then we have the vanishing
\[
H^k(Y,R^jf_*(\omega_X\otimes F^\irr_a\ccM)\otimes L)=0\quad\forall j,\;\forall k>0,\text{ and }\forall a\in[a_o,a_o+1).
\]
\end{corollaire}

\pagebreak[2]
\begin{remarques}\label{rem:vanishing}\mbox{}
\begin{enumerate}
\item\label{rem:vanishing1}
The theorem will be proved for all objects of the category $\Cresc(X)$ introduced by T.\,Mochizuki in \cite{Mochizuki21} (\cf Section \ref{subsec:compmochi}), as the various compatibilities with functors needed for the proof are proved for this category and not for $\IrrMHM(X)$. One could also apply the result to objects of the category of exponential mixed Hodge modules introduced in \loccit
\item\label{rem:vanishing2}
For each $a\in A+\ZZ$, let us denote by ${<}a$ the predecessor of $a$ and let us set $\gr^{F^\irr}_a\ccM=F^\irr_a\ccM/F^\irr_{<a}\ccM$. We will show in Appendix \ref{app:B} that the statement of Theorem~\ref{th:vanishing} also holds for the $A+\ZZ$-graded complexes $\gr^{F^\irr}\pDR(\ccM):=\bigoplus_{a\in A+\ZZ}\gr^{F^\irr}_a\ccM$.
\item\label{rem:vanishing3}
One can also prove the vanishing of $H^k\bigl(X,R^jf_*(\omega_X\otimes\gr^{F^\irr}_a(\ccM))\otimes L\bigr)$ for~$k$,~$j$ and~$a$ as in Corollary \ref{cor:vanishing4} (\cf Appendix \ref{app:B}).
\end{enumerate}
\end{remarques}

\subsection{Geometric consequences}
Let us emphasize an example where Theorem \ref{th:vanishing} applies. Let $L$ be an ample line bundle on $X$ and let $D\subset X$ be a divisor with normal crossings. Recall that the Kodaira-Norimatsu vanishing theorem (\cite{Norimatsu78}): for each integer $p\geq0$,
\begin{equation}\label{eq:KSlog}
\begin{aligned}
H^q(X,\Omega_X^p(\log D)\otimes L)&=0\quad \text{for }p+q>n,\\
H^q(X,\Omega_X^p(\log D)\otimes L^{-1})&=0\quad \text{for }p+q<n.
\end{aligned}
\end{equation}

Theorem \ref{th:vanishing} enables us to extend this vanishing result when $D$ contains the support of the pole divisor~$P$ of a morphism $\varphi:X\to\PP^1$. The sheaf $\Omega^p(\log D,\varphi)$ is defined as the subsheaf of $\Omega_X^p(\log D)$ consisting of logarithmic forms $\omega$ such that $\rd \varphi\wedge\omega$ remains logarithmic. More generally, for each $\alpha\in[0,1)\cap\QQ$ and each $p\geq0$, we~define $\Omega^p(\log D,\varphi,\alpha)$ as the subsheaf of $\ccO_X(\floor{\alpha P})\otimes_{\ccO_X}\Omega_X^p(\log D)$ consisting of germs of meromorphic $p$-forms~$\omega$ such that $\rd \varphi\wedge\omega$ is a local section of $\ccO_X(\floor{\alpha P})\otimes_{\ccO_X}\Omega_X^{p+1}(\log D)$.

Let us decompose the reduced divisor $P_\red$ as $\bigcup_{i\in I}P_i$ and let $m_i$ be the multiplicity of $P_i$ in $P$. For each $\alpha\in[0,1)\cap\QQ$, let us set $I_\alpha=\{i\in I\mid \alpha m_i\in\NN\}$ and $P_\alpha=\bigcup_{i\in I_\alpha}P_i$. In what follows, it is enough to consider that $\alpha$ belongs to the finite subset $A\subset[0,1)\cap\QQ$ consisting of those $a$ such that $am_i\in\NN$ for some $i\in I$. If $\alpha>0$, we denote by ${<}\alpha$ the predecessor of $\alpha$ in $A$. For $\alpha\in A\cap(0,1)$ and $p\geq0$, the quotient sheaf $\Omega^p(\log D,\varphi,\alpha)/\Omega^p(\log D,\varphi,{<}\alpha)$ is supported on the divisor~$P_\alpha$.

\begin{corollaire}[of Theorem \ref{th:vanishing}]\label{cor:vanishing}
With the above assumptions and notations, for each $p\geq0$, the sheaves $\Omega^p(\log D,\varphi,\alpha)$ ($\alpha\in A$) satisfy the Kodaira-Saito vanishing property analogous to \eqref{eq:KSlog}.
\end{corollaire}

In particular, we obtain the vanishing $H^k(X,\omega_X(D+\lfloor\alpha P\rfloor)\otimes L)$ for $k>n$.

\begin{remarque}
By using Remark \ref{rem:vanishing}\eqref{rem:vanishing2}, one also obtains that the property of Corollary \ref{cor:vanishing} holds for the sheaves $\Omega^p(\log D,\varphi,\alpha)/\Omega^p(\log D,\varphi,{<}\alpha)$ ($\alpha\in A\cap(0,1)$).
\end{remarque}

\begin{corollaire}[of Corollary \ref{cor:vanishing4}]\label{cor:vanishing5}
Let $\varphi:X\to\PP^1$ be a projective morphism and set $P=\varphi^*(\infty)$. Assume that the support of $P$ is contained in a (reduced) divisor with normal crossings $D$ in $X$. Let $f:X\to Y$ be a projective morphism to a smooth projective variety~$Y$ and let $L$ be an ample line bundle on $Y$. Then for each $\alpha\in A$ we have the vanishing property
\[
H^k(Y, R^jf_*\omega_X(D+\lfloor\alpha P\rfloor)\otimes L)=0\quad\text{for all $k>0$ and all $j$}.
\]
\end{corollaire}

\subsection{Notations and conventions}\label{subsec:nota}
All over the paper, we will use the following notations and conventions.
\begin{enumerate}\setcounter{enumi}{-1}
\item\label{nota:0}
Unless otherwise stated, the sheaves of modules are left modules over their corresponding sheaf of ring.

\item\label{nota:1}
Let $X$ be a complex manifold and let $\ccD_X$ be the sheaf of holomorphic differential operators on $X$, with its filtration $F_\bbullet\ccD_X$ by the order. We denote by $R_X$ the Rees ring $R_F\ccD_X:=\bigoplus_kF_k\ccD_X\hb^k$, and by $\wt R_X$ the sheaf $R_X\langle\hb^2\partial_\hb\rangle$.\footnote{In \cite{Bibi15}, we use the notation $R_X^\mathrm{int}$ and $\cR_\cX^\mathrm{int}$; here, we adopt the notation $\wt R_X$ of \cite{Mochizuki21}.} We have the inclusions of sheaves of rings on $X$:
\[
R_X\subset\wt R_X\subset\ccD_X\otimes_\CC\CC[\hb]\langle\partial_\hb\rangle.
\]
Inverting the action of $\hb$ leads to the ring $R_X(*\hb):=\ccO_X[\hb,\hbm]\otimes_{\ccO_X[\hb]}R_X$ which is isomorphic to $\ccD_X\otimes_\CC\CC[\hb,\hbm]$. Similarly, $\wt R_X(*\hb)$ is isomorphic to the ring of differential operators $\ccD_X\otimes_\CC\CC[\hb,\hbm]\langle\hb^2\partial_\hb\rangle$. The inclusions above become
\[
R_X(*\hb)\subset\wt R_X(*\hb)=\ccD_X\otimes_\CC\CC[\hb,\hbm]\langle\partial_\hb\rangle.
\]

\item\label{nota:2}
If we consider the complex manifold $\cX:=X\times\CC_\hb$ with sheaf of holomorphic functions $\cO_\cX$ and projection $\pi:\cX\to X$, we have the analytic versions of the previous rings by applying the transformation $\cO_\cX\otimes_{\pi^{-1}\ccO_X[\hb]}\cbbullet$. We denote by $0_\hb$ the divisor $X\times\{0\}\subset\cX$. We obtain the sheaves
\[
\cR_\cX\subset\wt\cR_\cX\subset\cD_\cX\quand\cR_\cX(*0_\hb)\subset\wt\cR_\cX(*0_\hb)=\cD_\cX(*0_\hb).
\]
We denote by $\cX^\circ$ the open subset \hbox{$\cX\moins 0_\hb$}.

\item\label{nota:3}
We consider the partial projective completion $\fX=X\times\PP^1$ of $\cX$, so that $\cX$ is a Zariski open subset of $\fX$. We simply denote by $0_\hb$, \resp $\infty_\hb$, the hypersurfaces $X\times\{0\}$, \resp $X\times\{\infty\}$ of $\fX$. We have $\cX=\fX\moins\infty_\hb$ and $\cX^\circ=X\times\CC^*=\fX\moins(0_\hb\cup\infty_\hb)$. Similarly, $\fX^\circ$ denotes the open subset $\fX\moins 0_\hb$. Then $\fX$ is covered by the open sets~$\cX$ and $\fX^\circ$ with intersection $\cX^\circ$. Since the sheaf $\wt\cR_{\cX^\circ}$ is identified with $\cD_{\cX^\circ}$, we can extend it as $\cD_{\fX^\circ}(*\infty_\hb)$ on $\fX^\circ$. Then $\wt\cR_\cX$ can be extended as $\wt\cR_\fX\subset\cD_{\fX}(*\infty_\hb)$, so~that
\[
\wt\cR_\fX|_{\cX}=\wt\cR_\cX\subset\cD_\cX,\quad\wt\cR_\fX|_{\fX^\circ}=\cD_{\fX^\circ}(*\infty_\hb),\quad\wt\cR_\fX(*0_\hb)=\cD_{\fX}(*(0_\hb\cup\infty_\hb)).
\]
We also denote by $\pi$ the projection $\fX\to X$.

\item\label{nota:3b}
Assume that $X=Y\times\Delta_t$ for some disc $\Delta_t\subset\CC$. Each sheaf of rings considered above is equipped with an increasing filtration $V_\bbullet$ such that $t$ has order $-1$, $\partiall_t$ has order $1$ and operators defined on $Y$ have order zero. In such a way, the operators of $V$-order zero form a subsheaf of rings $V_0$.

\item\label{nota:3c}
The following proposition will be used various times.

\begin{proposition}\label{prop:subcoh}
Let $\cZ$ be any of the spaces $\cX,\fX$ and let $\cA_\cZ\subset\cB_\cZ$ by any pair of nested subsheaves of rings considered in \eqref{nota:2}--\eqref{nota:3b}. Let $\cN$ be a coherent $\cB_\cZ$-module which the union of a sequence of coherent $\cO_\cZ$-submodules $\cN_i$. Let $\cN'\subset\cN$ be a coherent $\cA_\cZ$-submodule of $\cN$. Then each $\cO_\cZ$-module $\cN'\cap\cN_i$ is coherent and $\cN'=\bigcup_i(\cN'\cap\cN_i)$ is also the union of a sequence of coherent $\cO_\cZ$-submodules.
\end{proposition}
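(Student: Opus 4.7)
The plan is to reduce to a local question and then invoke Noetherianity of the stalks of $\cO_\cZ$. The equality $\cN' = \bigcup_i(\cN'\cap\cN_i)$ is immediate from $\cN' \subset \cN = \bigcup_i\cN_i$, so the content of the proposition is the $\cO_\cZ$-coherence of each $\cN'\cap\cN_i$. I will work near a fixed point $z_0\in\cZ$ and use the $\cA_\cZ$-coherence of $\cN'$ to choose, on a small enough neighborhood $U$ of $z_0$, finitely many $\cA_\cZ$-generators $s_1,\dots,s_r$ of $\cN'|_U$; after shrinking $U$ and enlarging an index $i_0$, all $s_\alpha$ are sections of $\cN_{i_0}|_U$.

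The key observation is that each ring $\cA_\cZ$ listed in \eqref{nota:2}--\eqref{nota:3b} is an increasing union $\cA_\cZ = \bigcup_\ell \cA_\cZ^{(\ell)}$ of $\cO_\cZ$-coherent subsheaves, obtained by combining the order filtration with the powers of $\hb^{\pm1}$ needed in the localized versions along $0_\hb$ or $\infty_\hb$, and, when relevant, the $V$-filtration. I set $\mathcal{M}^\ell := \cA_\cZ^{(\ell)}\cdot(s_1,\dots,s_r)$, which gives an increasing exhaustion of $\cN'|_U$. After a further shrinking of $U$, the finitely many local $\cO_\cZ$-generators of $\mathcal{M}^\ell$ all lie in some $\cN_{j(\ell)}|_U$ with $j(\ell)\geq i_0$, which realizes $\mathcal{M}^\ell$ as the image of a morphism of coherent $\cO_\cZ$-modules $(\cA_\cZ^{(\ell)})^r \to \cN_{j(\ell)}|_U$, hence as an $\cO_\cZ$-coherent sheaf. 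The intersection $\mathcal{M}^\ell \cap \cN_i$ is then coherent as an intersection of two coherent subsheaves of the common coherent ambient $\cN_{\max(i,j(\ell))}|_U$.

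What remains is to pass from the increasing union $\cN' \cap \cN_i = \bigcup_\ell (\mathcal{M}^\ell \cap \cN_i)$ of coherent $\cO_\cZ$-submodules of $\cN_i$ to a single such submodule on a neighborhood of $z_0$, and this is the step I expect to be the main obstacle. Here I invoke Noetherianity: the stalk $\cO_{\cZ,z_0}$ is Noetherian, so the finitely generated module $\cN_{i,z_0}$ is Noetherian, and the increasing sequence $(\mathcal{M}^\ell \cap \cN_i)_{z_0}$ must stabilize at some index $\ell_0$. Since the $\mathcal{M}^\ell \cap \cN_i$ are coherent $\cO_\cZ$-subsheaves of the coherent sheaf $\cN_i$, this stalkwise stabilization propagates to a neighborhood of $z_0$, on which $\cN' \cap \cN_i$ coincides with the coherent sheaf $\mathcal{M}^{\ell_0} \cap \cN_i$. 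As $z_0$ was arbitrary, $\cN' \cap \cN_i$ is $\cO_\cZ$-coherent, and the increasing sequence $(\cN' \cap \cN_i)_i$ is the desired $\cO_\cZ$-coherent exhaustion of $\cN'$.
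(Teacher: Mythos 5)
Your proof is correct, and it reaches the same conclusion by a genuinely different route than the paper. The paper also reduces to the local question, exhibits a local increasing sequence of coherent $\cO_\cZ$-submodules $\cN'_j$ exhausting $\cN'$ (which is exactly what you build as $\mathcal M^\ell$ out of a coherent $\cO_\cZ$-exhaustion $\cA_\cZ^{(\ell)}$ of the ring applied to finitely many $\cA_\cZ$-generators of $\cN'$), and then concludes by local stationarity of an increasing chain of coherent subsheaves of $\cN_i$ — this last step is the same as your stalkwise Noetherianity plus propagation. The point of departure is how each $\cN'_j\cap\cN_i$ (your $\mathcal M^\ell\cap\cN_i$) is shown to be $\cO_\cZ$-coherent: the paper invokes $\cO_\cZ$-pseudo-coherence of $\cB_\cZ$ (in the sense of Kashiwara) to conclude that the locally finitely generated $\cO_\cZ$-submodule $\cN_i+\cN'_j$ of the $\cB_\cZ$-coherent module $\cN$ is $\cO_\cZ$-coherent, then takes $\cN_i\cap\cN'_j$ as a kernel; you instead avoid pseudo-coherence entirely by observing that the finitely many $\cO_\cZ$-generators of $\mathcal M^\ell$ can be absorbed into some $\cN_{j(\ell)}$, so that both $\mathcal M^\ell$ and $\cN_i$ are coherent subsheaves of the single coherent ambient $\cN_{\max(i,j(\ell))}$, where intersections of coherent subsheaves are automatically coherent. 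Your approach is more elementary (no appeal to pseudo-coherence) and more explicit about where the local exhaustion of $\cN'$ comes from; the paper's use of pseudo-coherence is slicker and would go through even in settings where a coherent ambient $\cN_{j}$ containing $\mathcal M^\ell$ were not readily available. One small expository gap: ``the finitely many local $\cO_\cZ$-generators of $\mathcal M^\ell$'' should be spelled out, before you know $\mathcal M^\ell$ is coherent, as the products $e_\mu s_\alpha$ of local $\cO_\cZ$-generators $e_\mu$ of $\cA_\cZ^{(\ell)}$ with the $s_\alpha$; then $\cA_\cZ^{(\ell)}\cdot s_\alpha=\sum_\mu\cO_\cZ\cdot e_\mu s_\alpha$ lands in $\cN_{j(\ell)}$, which is what you actually need for the morphism $(\cA_\cZ^{(\ell)})^r\to\cN_{j(\ell)}|_U$ to be well defined.
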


\begin{proof}
It is enough to check that each $\cN'\cap\cN_i$ is coherent, and this is a local question on~$\cZ$, so we can assume that $\cN'$ is the union of a sequence of coherent $\cO_\cZ$\nobreakdash-mod\-ules~$\cN'_j$. We first claim that $\cN_i\cap\cN'_j$ is $\cO_\cZ$-coherent. It is enough to prove that $\cN_i+\cN'_j\subset\cN$ is $\cO_\cZ$-coherent since $\cN_i\cap \cN'_j$ is isomorphic to the kernel of $\cN_i\oplus \cN'_j\to \cN_i+\cN'_j$. Since $\cB_\cZ$ is $\cO_\cZ$-pseudo-coherent (\cf\cite[Def.\,A.5(1) \& Lem.\,A.26]{Kashiwara03}), it is enough to check that $\cN_i+\cN'_j$ is locally $\cO_\cZ$-finitely generated, which is clear.

From the claim we deduce that the increasing sequence $(\cN_i\cap \cN'_j)_j$ is locally stationary since $\cN_i$ is $\cO_\cZ$-coherent. Therefore, $\cN_i\cap \cN'=\bigcup_j (\cN_i\cap \cN'_j)$ is $\cO_\cZ$\nobreakdash-co\-herent.
\end{proof}

\item\label{nota:4}
For the rescaling operation, we denote by $\tau$ the rescaling variable. First, we~consider the algebraic description of the rescaling operation. We consider the sheaf of rings $\ccO_X[\tau,\taum,\hc]$ with the morphism of rings\vspace*{-3pt}\enlargethispage{\baselineskip}
\begin{align*}
\ccO_X[\hb]&\to\ccO_X[\tau,\taum,\hc]\\
\hb&\mto\hc\taum.
\end{align*}
By identifying $\ccO_X[\tau,\taum,\hc]$ with $\ccO_X[\hb][\tau,\taum]$ via the above correspondence, one sees that $\ccO_X[\tau,\taum,\hc]$ is $\ccO_X[\hb]$-flat. For the sake of clarity, we make a distinction between the variables $\hb$ and $\hc$, although both play the role of the twistor variable, before, respectively after, rescaling. We set\vspace*{-3pt}
\begin{equation}\label{eq:tauRX}
\tauR'_X=R_F\cD_X[\tau],\quad\tauR_X=R_F\cD_X[\tau]\langle\partiall_\tau\rangle,\quad\wt\tauR_X=\tauR_X\langle\hc^2\partial_\hc\rangle,
\end{equation}
which are sheaves on $X$ and where $\hc$ is the twistor variable. In local coordinates, $\tauR'_X$ and $\tauR_X$ read\vspace*{-3pt}
\[
\tauR'_X=\ccO_X[\tau,\hc]\langle\partiall_x\rangle,\quad \tauR_X=\ccO_X[\tau,\hc]\langle\partiall_x,\partiall_\tau\rangle,
\]
where, on the right-hand sides, $\partiall_x=\hc\partial_x$ and $\partiall_\tau:=\hc\partial_\tau$. We identify $\iota^*_{\tau=\hc}\tauR'_X:=\tauR'_X/(\tau-\hc)\tauR'_X$ with $R_F\cD_X$ that we can also write as $\sum_k\tau^k\otimes F_k\cD_X$.
We also set\vspace*{-3pt}
\begin{equation}\label{eq:tauRXtaum}
\begin{aligned}
\tauR'_X(*\tau)&=\ccO_X[\tau,\taum,\hc]\langle\partiall_x\rangle,\\
\tauR_X(*\tau)&=\ccO_X[\tau,\taum,\hc]\langle\partiall_x,\tau\partiall_\tau\rangle,\\
\wt\tauR_X(*\tau)&=\tauR_X(*\tau)\langle\hc^2\partial_\hc\rangle=\ccO_X[\tau,\taum,\hc]\langle\partiall_x,\tau\partiall_\tau,\hc^2\partial_\hc\rangle.
\end{aligned}
\end{equation}

\item\label{nota:5}
For the analytic version of the rescaling operation, we decorate the spaces with the letter $\tau$ on the left up side. So, we set\vspace*{-3pt}
\[
\begin{aligned}
\tauX&=X\times\CC_\tau,\\
\ov\tauX&=X\times\PP^1_\tau,
\end{aligned}
\qquad
\begin{aligned}
\taucX^\circ&=\cX^\circ\times\CC_\tau,\\
\ov\taucX^\circ&=\cX^\circ\times\PP^1_\tau,
\end{aligned}
\qquad
\begin{aligned}
\taucX&=\cX\times\CC_\tau,\\
\ov\taucX&=\cX\times\PP^1_\tau,
\end{aligned}
\qquad
\begin{aligned}
\taufX&=\fX\times\CC_\tau,\\
\ov\taufX&=\fX\times\PP^1_\tau.
\end{aligned}
\]
We also consider the divisors $0_\tau,\infty_\tau,0_\hc,\infty_\hc$ in $X\times\PP^1_\tau\times\PP^1_\hc=\ov\taufX$ with an obvious meaning, and we denote similarly their restrictions to the above open subsets $\taucX,\taucX^\circ,\taufX$ of $\ov\taufX$.

Any of the projections $\tauX\to X$, $\taucX\to\cX$ or $\taufX\to\fX$ (omitting $\tau$) is denoted by~$p$, while any of the projections $\cX\to X$, $\taucX\to\tauX$ or $\taufX\to\tauX$ (omitting $\hb$ or $\hc$) is denoted by $\pi$. Any of the projections $\taucX\to X$ and $\taufX\to X$ is denoted by $q$.

We consider the following sheaves of rings on $\ov\taufX$, whose sheaf-theoretic pushforward by $q_*$ gives back the sheaves of rings \eqref{eq:tauRX} and \eqref{eq:tauRXtaum} on $X$. By definition, any sheaf~$\cR$ or~$\wt\cR$ below satisfies $\cR=\cR(*(\infty_\tau\cup\infty_\hc))$. We thus consider
\[
\begin{aligned}
\cR_{\ov\taufX/\PP^1_\tau}&\quad\text{with $q_*\cR_{\ov\taufX/\PP^1_\tau}=\tauR'_X$},\\
\cR_{\ov\taufX}&\quad\text{with $q_*\cR_{\ov\taufX}=\tauR_X$},\\ \wt\cR_{\ov\taufX}&\quad\text{with $q_*\wt\cR_{\ov\taufX}=\wt\tauR_X$},
\end{aligned}
\quand
\begin{aligned}
\cR_{\ov\taufX/\PP^1_\tau}(*0_\tau)&\quad\text{with $q_*\cR_{\ov\taufX/\PP^1_\tau}=\tauR'_X(*\tau)$},\\
\cR_{\ov\taufX}(*0_\tau)&\quad\text{with $q_*\cR_{\ov\taufX}=\tauR_X(*\tau)$},\\
\wt\cR_{\ov\taufX}(*0_\tau)&\quad\text{with $q_*\wt\cR_{\ov\taufX}=\wt\tauR_X(*\tau)$},
\end{aligned}
\]
and we denote similarly their restrictions to the open subsets $\taucX,\taucX^\circ,\taufX$ of $\ov\taufX$. All these sheaves are subsheaves of rings of $\cD_{\ov\taufX}(*(0_\tau\cup0_\hc\cup\infty_\tau\cup\infty_\hc))$.

\item\label{nota:6}
In both the algebraic and the analytic settings, the above sheaves are equipped with their $V$-filtration with respect to the function~$\tau$, that we denote by $\tauV_\bbullet$. We have
\begin{align*}
\tauV_k(\tauR_X)&=
\begin{cases}
\tau^{-k}\,\tauR'_X\langle\tau\partiall_\tau\rangle&(k\leq0),\\
\sum_{j=0}^k\partiall_\tau^k\,\tauV_0\tauR_X&(k\geq1),
\end{cases}
\\
\tauV_k(\wt\tauR_X)&=\tauV_k(\tauR_X\langle\hc^2\partial_\hc\rangle)\quad \forall k\in\ZZ.
\end{align*}
For the $\tau$-localized modules, the filtration is simply the $\tau$-adic filtration made increasing. The definition is similar for the analytic sheaves $\cR$ and we have the relations of the form $q_*\tauV_k\cR=\tauV_k\tauR$.

\item\label{nota:7}
Let $A_X$ be any sheaf on $X$ considered in \eqref{nota:4} and \eqref{nota:6}, and let $\cA_\cX$ be the corresponding analytic sheaf on $\cX$. Then $\cA_\cX$ is $\pi^{-1}A_X$-flat. Furthermore, an~$A_X$\nobreakdash-mod\-ule $N$ has no $\CC[\hb]$-torsion (or $\CC[\hc]$-torsion) if and only if its analytification $\cN:=\cA_\cX\otimes_{\pi^{-1}A_X}\pi^{-1}N$ has no such torsion.

\item\label{nota:10}
Any holomorphic map $f:X\to Y$ between complex manifolds induces a holomorphic map between the corresponding associated spaces in \eqref{nota:2} and \eqref{nota:3}, that we still denote by $f$. For all sheaves considered in \eqref{nota:4}--\eqref{nota:6}, there is associated a transfer module, and the pullback and pushforward functors are defined correspondingly. For all these variants, we denote their $k$-th cohomological version by $\Dm f^{*(k)}$ and $\Dm f_*^{(k)}$ ($k\in\ZZ$). The context should make clear which category they apply to. When $f$ is flat, we simply denote $\Dm f^*$ instead of $\Dm f^{*(0)}$.
\end{enumerate}

\begin{remarque}
Proposition \ref{prop:subcoh} applies similarly in the setting of \eqref{nota:5}--\eqref{nota:6}.
\end{remarque}

\subsubsection*{Acknowledgements}
I would like to thank Takuro Mochizuki for pointing out a problem in the memoir \cite{Bibi15} and for developing an effective theory to improve the original (corrected) definition. I also thank him for the numerous discussions on the theory of mixed Hodge modules and for the interest he shows in the subject of irregular Hodge structures. I would also like to thank Christian Schnell and Jeng-Daw Yu for numerous enlightening discussions.

\section{A general framework for producing an irregular Hodge filtration}
\label{sec:rescalingalgebraic}

\subsection{A reminder on strict \texorpdfstring{$\RR$}{R}-specializability}\label{subsec:reminderV}
Let us assume that $X=Y\times\Delta_t$ (\cf Notation \ref{subsec:nota}\eqref{nota:3b}). Let $M$ be a coherent $R_X$\nobreakdash-mod\-ule. We say that $M$ is \emph{\sRspe along~$t$} if there exists an increasing\footnote{One often uses a decreasing $V$-filtration $V^\cbbullet M$; the correspondence is $V^aM=V_{-a}M$.} filtration $V_\bbullet M$ of $M$ indexed by $A+\ZZ$ for some discrete set $A\subset[0,1)$ satisfying the following properties:
\begin{itemize}
\item
for each $a\in A+\ZZ$, $V_a M$ is $V_0R_X$-coherent and $M=\bigcup_aV_a M$,
\item
$V_{\alpha+k}M=t^{-k}V_\alpha M$ for each $\alpha\in A$ and $k\in\ZZ_{\leq0}$,
\item
$V_aM=V_{{<}a}M+\partiall_tV_{a-1}M$ if $a>1$,
\item
$t\partiall_t+a\hb$ is nilpotent on $\gr_a^{V}M$ for any $a\in A+\ZZ$,
\item
for any compact set $K\subset X$, there exists a finite subset $A_K\subset A$ such that $\gr_a^{V}M|_K=0$ for $a \notin A_K+\ZZ$,
\item
multiplication by $\hb$ is injective on $\gr_a^{V}M$ for any $a\in A+\ZZ$ (we also say that $\gr_a^{V}M$ has no $\hb$-torsion).
\end{itemize}

Let us recall some properties of \sRspe modules.

\begin{itemize}
\item
If a $V$-filtration along $t$ exists for $M$, it is unique, so that checking \sRspy is a local question.
\item
Any morphism between \sRspe modules is (possibly not strictly) compatible with the $V$-filtration.
\end{itemize}

A similar definition exists for a coherent $\cR_\cX$-module $\cM$.

\begin{lemme}\label{lem:VVan}
Let $M$ be a coherent $R_X$-module and set $\cM=\cR_\cX\otimes_{\pi^{-1}R_X}\pi^{-1}M$. If $M$ is \sRspe along $t$, then $\cM$ is so, and the $V$-filtrations along $t$ satisfy $V_a\cM=V_0(\cR_\cX)\otimes_{\pi^{-1}V_0(R_X)}\pi^{-1}V_aM$ for each $a\in A+\ZZ$.
\end{lemme}

\begin{proof}
Let us assume $M$ is \sRspe along $t$ and let us set $U_a\cM=V_0(\cR_\cX)\otimes_{\pi^{-1}V_0(R_X)}\pi^{-1}V_aM$ for each $a\in A+\ZZ$. Then $U_\bbullet\cM$ satisfies the characteristic properties of the $V$-filtration of $\cM$, since $V_0(\cR_\cX)$ is $\pi^{-1}V_0(R_X)$-flat, hence is equal to it by uniqueness.
\end{proof}

\Subsection{Rescaling}\label{subsec:rescalingalgebraic}

\begin{definition}[Rescaling in the $\hb$-algebraic setting]\label{def:rescalingalgebraic}
Given a left $\wt R_X$-module $M$, the module $\tauM:=\ccO_X[\tau,\taum,\hc]\otimes_{\ccO_X[\hb]}M$ is naturally equipped with the structure of an $\wt \tauR_X(*\tau)$-module as follows:
\begin{enumeratea}
\item\label{enum:a}
We identify $\tauM$ with $\ccO_X[\tau,\taum]\otimes_{\ccO_X}M$ as an $\ccO_X[\tau,\taum]$-module.
\item
The action of $\hc$ is defined by $\hc\cdot\bigoplus_{k\in\ZZ}(\tau^k\otimes m_k)=\bigoplus_{k\in\ZZ}\tau^{k+1}\otimes(\hb m_k)$.
\item
The action of derivations is defined by (with the same notation $\partiall_{x_i}$ for the distinct elements $\hb\partial_{x_i}$ of $\wt R_X$ and $\hc\partial_{x_i}$ of $\wt \tauR_X(*\tau)$):
\begin{equation*}
\begin{aligned}
\partiall_{x_i}(1\otimes m)&=\tau(1\otimes\partiall_{x_i}m)=\tau\otimes\partiall_{x_i}m,\\
\partiall_\tau(1\otimes m)&=-1\otimes\hb^2\partial_\hb m,\\
\hc^2\partial_\hc(1\otimes m)&=\tau(1\otimes\hb^2\partial_\hb m)=\tau\otimes\hb^2\partial_\hb m,
\end{aligned}
\end{equation*}
and extended in a natural way to $\tauM$ by means of the decomposition \eqref{enum:a}.
\end{enumeratea}
\end{definition}

The identification \ref{def:rescalingalgebraic}\eqref{enum:a} provides $\tauM$ with a natural grading by the degree in $\tau$: $\tauM=\bigoplus_{k\in\ZZ}\tau^k\otimes M$. With respect to it, the sections of $\ccO_X[z]$ act in an homogeneous way with degree zero, and~$\hc,\tau$ with degree one. The derivations $\partiall_{x_i}$, $\tau\partiall_\tau$ and $\hc^2\partial_\hc$ are also homogeneous of \hbox{degree} one and one can identify the homogeneous component $\tau^k\otimes M$ with \hbox{$\ker(\hc^2\partial_\hc+\tau\partiall_\tau-k\hc)$}. We say that $\tauM$ is \emph{strict} if $\tauM$ has no $\CC[\hc]$-torsion, and in particular $\tauM\subset\tauM(*\hc)$.

Any $\wt R_X$-linear morphism $\lambda{:}\,M_1\!\to\! M_2$ yields a $\wt\tauR_X$-linear morphism \hbox{$\taulambda{:}\,\tauM_1\!\to\!\tauM_2$}, and $\wt\tauR_X$-linearity implies that $\taulambda$ is graded, since it commutes with $\hc^2\partial_\hc+\tau\partiall_\tau$.

If the $\wt R_X$-module $M$ is coherent as an $R_X$-module, it is $\wt R_X$\nobreakdash-co\-herent and its localized module $M(*\hb):=\wt R_X(*\hb)\otimes_{\wt R_X}M$ is a coherent $\cD_X[\hb,\hbm]\langle\partial_\hb\rangle$-module (\cf Notation \ref{subsec:nota}\eqref{nota:1}). We say that $M$ is \emph{strict} if it has no $\CC[\hb]$-torsion, and in particular $M\subset M(*\hb)$. We denote by $\ccM$ the $\ccD_X$-module $M/(\hb-1)M$.

\begin{proposition}[Coherence]
Assume that the $\wt R_X$-module $M$ is $R_X$-coherent. Then the $\wt\tauR_X(*\tau)$-module $\tauM$ is $\tauR'_X(*\tau)$-coherent, hence also $\wt\tauR_X(*\tau)$-coherent.
\end{proposition}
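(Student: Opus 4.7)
My plan is to realize $\tauM$ as a base change of $M$ along a canonical ring embedding, lift a local finite $R_X$-presentation of $M$ to one over $\tauR'_X(*\tau)$, and invoke coherence of the target ring.

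Working locally in coordinates $(x_1,\dots,x_n)$ on $X$, I would first exhibit a sheaf-of-rings homomorphism
\[
\iota\colon R_X\hookrightarrow\tauR'_X(*\tau),\qquad f(x)\mto f(x),\;\;\hb\mto\hc\taum,\;\;\hb\partial_{x_i}\mto\taum\partiall_{x_i},
\]
whose only nontrivial verification is $[\taum\partiall_{x_i},f(x)]=\hc\taum\partial_{x_i}(f)=\iota(\hb\partial_{x_i}(f))$, using centrality of $\tau,\taum,\hc$ and $[\partiall_{x_i},f]=\hc\partial_{x_i}(f)$. Unwinding the formulas of Definition \ref{def:rescalingalgebraic} then shows that the $\tauR'_X(*\tau)$-action on $\tauM$ restricted along $\iota$ agrees with the original $R_X$-action on $M$. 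Combined with the $\ccO_X[\tau,\taum,\hc]$-linear isomorphism
\[
\ccO_X[\tau,\taum,\hc]\otimes_{\ccO_X[\hb]}R_X\xrightarrow{\;\sim\;}\tauR'_X(*\tau),\qquad 1\otimes(\hb\partial_x)^\alpha\mto\taum^{|\alpha|}\partiall_x^\alpha,
\]
which is well-defined thanks to the $\ccO_X[\hb]$-flatness of $\ccO_X[\tau,\taum,\hc]$ recorded in Notation \ref{subsec:nota}\eqref{nota:4}, this yields a canonical $\tauR'_X(*\tau)$-linear isomorphism $\tauR'_X(*\tau)\otimes_{R_X}M\xrightarrow{\sim}\tauM$.

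Granted this identification, the conclusion is immediate. A local finite $R_X$-presentation $R_X^{p}\xrightarrow{A}R_X^{q}\to M\to 0$, provided by $R_X$-coherence of $M$, tensors to a local finite $\tauR'_X(*\tau)$-presentation
\[
\tauR'_X(*\tau)^{p}\xrightarrow{\iota(A)}\tauR'_X(*\tau)^{q}\to\tauM\to 0,
\]
so $\tauM$ is locally finitely presented over $\tauR'_X(*\tau)$. To upgrade this to coherence I would check that $\tauR'_X(*\tau)$ is itself a coherent sheaf of rings, by the standard filtered-ring argument: the order-in-$\partiall_x$ filtration of $\tauR'_X(*\tau)=\ccO_X[\tau,\taum,\hc]\langle\partiall_x\rangle$ has commutative graded $\ccO_X[\tau,\taum,\hc][\xi_1,\dots,\xi_n]$, whose stalks are Noetherian as localizations of polynomial extensions of the Noetherian stalks of $\ccO_X$, and good-filtration lifting then transports Noetherianity, hence coherence, to $\tauR'_X(*\tau)$.

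For the final assertion, the ring inclusion $\tauR'_X(*\tau)\subset\wt\tauR_X(*\tau)$ promotes the above presentation to a local finite $\wt\tauR_X(*\tau)$-presentation of $\tauM$, and an analogous filtered-ring argument, with $\tau\partiall_\tau$ and $\hc^2\partial_\hc$ added to the order filtration, yields coherence of $\wt\tauR_X(*\tau)$ as a sheaf of rings. The principal obstacle I anticipate is not the formal base-change step but precisely the coherence of $\tauR'_X(*\tau)$: while morally a standard Rees-type computation, some care is needed to ensure that the localization at $\tau$ is harmless for the good-filtration lifting, and this is where I would spend the bulk of the effort in the actual write-up.
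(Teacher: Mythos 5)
Your proposal is correct and takes essentially the same approach as the paper's proof, which simply observes that ${}^\tau\!(R_X)\simeq\tauR'_X(*\tau)$ via $1\otimes\partiall_{x_i}\mto\taum\partiall_{x_i}$ and declares ``the assertion is then clear.'' You spell out what ``clear'' compresses — the ring embedding $\iota$, the base-change isomorphism $\tauR'_X(*\tau)\otimes_{R_X}M\simeq\tauM$, transport of finite presentations, and coherence of $\tauR'_X(*\tau)$ as a sheaf of rings — and you correctly flag that last point as the only place requiring real work.
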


\begin{proof}
If $M$ is any $R_X$-module, the rescaled object $\tauM$ is well-defined as a $\tauR'_X$\nobreakdash-mod\-ule, and if $M$ is any $\wt R_X$-module, this structure is enhanced to a $\wt\tauR_X$-structure by Definition~\ref{def:rescalingalgebraic}. In~par\-ticular, ${}^\tau\!(R_X)\simeq\tauR'_X(*\tau)$ by sending $1\otimes\partiall_{x_i}$ to $\taum\partiall_{x_i}$. The assertion is then clear.
\end{proof}

\begin{lemme}\label{lem:tauMstrict}
If the $\wt R_X$-module $M$ is strict, then $\tauM$ is strict and $(\hc-\tau)$ acts in an injective way on $\tauM$. Furthermore, the restricted module \hbox{$i^*_{\tau=\hc}\tauM:=\tauM/(\hc-\tau)\tauM$} is naturally identified, as a graded module, to $\CC[\tau,\taum]\otimes_\CC\ccM$ with its natural grading induced by that of $\CC[\tau,\taum]$. If~$M$ is $R_X$-coherent, then $i^*_{\tau=\hc}\tauM$ is $\ccD_X[\tau,\taum]$-coherent.
\end{lemme}

\begin{proof}
We use the decomposition \ref{def:rescalingalgebraic}\eqref{enum:a} above. For strictness, let $p(\hc)=\sum_{i=\delta}^da_i\hc^i\in\CC[\hc]$ with $d\geq1$ and $a_\delta,a_d\neq0$. Let $m\in\tauM$ be nonzero written as $m=\bigoplus_{k=k_0}^{k_1}\tau^k\otimes m_k$ with $m_{k_0}\neq0$, such that $p(\hc)m=0$. This implies that, for each $k\in\ZZ$, we have $\sum_{i=\delta}^da_i\hb^im_{k-i}=0$. Taking $k=k_0+\delta$ yields $a_\delta\hb^\delta m_{k_0}=0$, in~contradiction with the strictness assumption for $M$.

For the injectivity of $(\hc-\tau)$, we have
\begin{equation}\label{eq:hbtau}
(\hc-\tau)\cdot\bigoplus_{k\in\ZZ}(\tau^k\otimes m_k)=\bigoplus_{k\in\ZZ}\tau^{k+1}\otimes(\hb-1) m_k,
\end{equation}
and the assertion follows from the strictness of $M$.

The quotient $\tauM/(\hc-\tau)\cdot\tauM$ is identified with $\bigoplus_k\tau^k\otimes_\CC\ccM$ as a graded module. The action induced by that of $\partiall_{x_i}$ is by $\tau\otimes\partial_{x_i}$. The last assertion is then clear.
\end{proof}

\begin{proposition}\label{prop:basic}
Let $M$ be an $\wt R_X$-module which is $R_X$-coherent and strict. Then, for any coherent $\tau$-graded $\tauR'_X$-submodule $\tauM_0$ of $\tauM$ which satisfies
\begin{starequation}\label{eq:basicstar}
(\tau-\hc)\cdot\tauM_0=\tauM_0\cap(\tau-\hc)\cdot\tauM
\end{starequation}%
and
\begin{starstarequation}\label{eq:basicstarstar}
\ccO_X[\tau,\taum,\hc]\otimes_{\ccO_X[\tau,\hc]}\tauM_0=\tauM,
\end{starstarequation}%
there exists a unique coherent filtration $F_\bbullet\ccM$ of the $\ccD_X$-module $\ccM=M/(\hb-1)M$ such that
\[
R_F\ccM:=\bigoplus_k\tau^k\otimes F_k\ccM=i^*_{\tau=\hc}\tauM_0\subset i^*_{\tau=\hc}\tauM=\CC[\tau,\taum]\otimes_\CC\ccM.
\]
\end{proposition}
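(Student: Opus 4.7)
First I use the $\tau$-grading $\tauM=\bigoplus_{k\in\ZZ}\tau^k\otimes M$ to write $\tauM_0=\bigoplus_k\tau^k\otimes N_k$ for a sequence of $\ccO_X$-submodules $N_k\subset M$. Stability of $\tauM_0$ under $\tau$, $\hc$ and $\partiall_{x_i}$, through the formulas of Definition \ref{def:rescalingalgebraic}, translates respectively into $N_k\subset N_{k+1}$, $\hb N_k\subset N_{k+1}$ and $\hb\partial_{x_i}N_k\subset N_{k+1}$. I then define $F_k\ccM$ as the image of $N_k$ in $\ccM=M/(\hb-1)M$; the first inclusion yields $F_k\ccM\subset F_{k+1}\ccM$ and the third yields $\partial_{x_i}F_k\ccM\subset F_{k+1}\ccM$, i.e., compatibility with the order filtration of $\ccD_X$.

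To match the graded pieces, I use Lemma \ref{lem:tauMstrict}: the identification $i^*_{\tau=\hc}\tauM\simeq\CC[\tau,\taum]\otimes_\CC\ccM$ is $\tau$-graded, with $\tau^k\otimes m\mapsto\tau^k\otimes\overline m$. Since $\tau-\hc$ is homogeneous of degree one and $(\tau-\hc)(\tau^{k-1}\otimes n)=\tau^k\otimes(1-\hb)n$, the degree-$k$ part of $(\tau-\hc)\tauM_0$ equals $\tau^k\otimes(1-\hb)N_{k-1}$, while that of $(\tau-\hc)\tauM$ equals $\tau^k\otimes(1-\hb)M$. Hypothesis \eqref{eq:basicstar}, read in each degree, is exactly the equality $(1-\hb)N_{k-1}=N_k\cap(1-\hb)M$; equivalently, the natural arrow $\tauM_0/(\tau-\hc)\tauM_0\hookrightarrow\tauM/(\tau-\hc)\tauM$ is injective, with degree-$k$ image $\tau^k\otimes F_k\ccM$. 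This simultaneously matches $i^*_{\tau=\hc}\tauM_0$ with $R_F\ccM$ and yields the uniqueness. Exhaustion $\ccM=\bigcup_kF_k\ccM$ comes from \eqref{eq:basicstarstar}: for $m\in M$, there is locally $j\geq0$ with $\tau^j\otimes m=\tau^j(1\otimes m)\in\tauM_0$, hence $m\in N_j$.

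The main obstacle is coherence of the filtration, namely $\ccO_X$-coherence of each $F_k\ccM$ together with $R_F\ccD_X$-coherence of $R_F\ccM$. The latter is automatic: $\tauM_0$ is $\tauR'_X$-coherent and $\tau-\hc$ is a regular element of $\tauR'_X$ with $\tauR'_X/(\tau-\hc)\simeq R_F\ccD_X$, so the quotient $i^*_{\tau=\hc}\tauM_0$ is $R_F\ccD_X$-coherent. For the former, I would invoke the analogue of Proposition \ref{prop:subcoh} for $\cA=R_F\ccD_X\subset\cB=\ccD_X\otimes_\CC\CC[\tau,\taum]$: take $\cN=\CC[\tau,\taum]\otimes_\CC\ccM$, a coherent $\cB$-module since $\ccM$ is $\ccD_X$-coherent, expressed locally as the ascending union of $\ccO_X$-coherent, grading-compatible submodules arising from a finite local $\ccD_X$-presentation of $\ccM$; the submodule $\cN'=i^*_{\tau=\hc}\tauM_0$ is $\cA$-coherent and embeds in $\cN$ by \eqref{eq:basicstar}, so the proposition forces each graded piece $F_k\ccM$ to be $\ccO_X$-coherent. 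Finally, the good-filtration property $F_k\ccM=F_1\ccD_X\cdot F_{k-1}\ccM$ for $k$ large (locally) follows from a choice of homogeneous local $\tauR'_X$-generators of $\tauM_0$ of degrees $\leq d$: for $k>d$, each $N_k$ is produced from $N_{k-1}$ by the action of $\tau,\hc,\partiall_{x_i}$, which modulo $\hb-1$ is precisely $F_1\ccD_X\cdot F_{k-1}\ccM$.
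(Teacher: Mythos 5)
Your argument is correct and follows essentially the same route as the paper's: you use the $\tau$-grading of $\tauM_0$, reduce modulo $\tau-\hc$, read \eqref{eq:basicstar} as the injectivity of $i^*_{\tau=\hc}\tauM_0\hookrightarrow i^*_{\tau=\hc}\tauM$ (hence freedom from $\CC[\tau]$-torsion), read \eqref{eq:basicstarstar} as exhaustion, and obtain $R_F\ccD_X$-coherence of $i^*_{\tau=\hc}\tauM_0$ from $\tauR'_X$-coherence of $\tauM_0$. The one place where you are less economical than necessary is the appeal to Proposition~\ref{prop:subcoh} for $\ccO_X$-coherence of each $F_k\ccM$: this is already an automatic consequence of the statement you just proved, namely that $i^*_{\tau=\hc}\tauM_0$ is a graded coherent $R_F\ccD_X$-module (a $\tau$-graded finitely presented module over a graded ring whose degree-zero part is $\ccO_X$ and which is locally a finitely generated $\ccO_X$-algebra has $\ccO_X$-coherent graded pieces), so no analogue of Proposition~\ref{prop:subcoh} — whose conclusion is in any case about being a union of coherent subsheaves, not about graded pieces — is needed.
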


\begin{proof}
We identify $i^*_{\tau=\hc}\tauR'_X$ with $R_F\ccD_X=\bigoplus_k\tau^k\otimes F_k\ccD_X$. Since $\tauM_0$ is $\tau$-graded, that is, $\tauM_0=\bigoplus_k\tau^k\otimes \tauM_0^{(k)}$ with $\tauM_0^{(k)}\subset M$, and since $\tau-\hc$ is homogeneous of degree one, the pullback $i^*_{\tau=\hc}\tauM_0$ is also $\tau$\nobreakdash-graded. Moreover, \eqref{eq:basicstar} implies that it is contained in $i^*_{\tau=\hc}\tauM=\CC[\tau,\taum]\otimes_\CC\nobreak\ccM$, hence has no $\CC[\tau]$-torsion, and since $\tauM_0$ is $\tauR'_X$\nobreakdash-coherent, $i^*_{\tau=\hc}\tauM_0$ is a graded coherent $R_F\ccD_X$-module. Lastly, \eqref{eq:basicstarstar} implies that $\CC[\tau,\taum]\otimes_{\CC[\tau]}(i^*_{\tau=\hc}\tauM_0)=\CC[\tau,\taum]\otimes_\CC\ccM$.

The proposition follows from these properties.
\end{proof}

We now give a sufficient condition for the existence of $\tauM_0$ as in the proposition. For that purpose, let us recall the notion of $V$-filtration and \sRspy along $\tau$. We~consider the $V$-filtration of $\tauR_X(*\tau)$ along $\tau$ with
\[
\tauV_0(\tauR_X(*\tau))=\ccO_X[\tau,\hc]\langle\partiall_x,\tau\partiall_\tau\rangle=\tauR'_X\langle\tau\partial_\tau\rangle,
\]
and $\tauV_k(\tauR_X(*\tau))=\tau^{-k}\cdot\tauV_0(\tauR_X(*\tau))$ for $k\in\ZZ$ (\cf Notation \ref{subsec:nota}\eqref{nota:6}). We adapt below the definition of Section \ref{subsec:reminderV} to the localized case.

\begin{definition}\label{def:sRspyalg}
We say that the $\tauR_X(*\tau)$-module $\tauM$ is \emph{\sRspe along~$\tau$} if there exists an increasing filtration $\tauV_\bbullet(\tauM)$ of $\tauM$ indexed by $A+\ZZ$ for some discrete set $A\subset[0,1)$ satisfying the following properties:
\begin{itemize}
\item
for each $a\in A+\ZZ$, $\tauV_a(\tauM)$ is $\tauV_0(\tauR_X(*\tau))$-coherent and $\tauM=\bigcup_a\tauV_a(\tauM)$,
\item
$\tauV_{a+k}(\tauM)=\tau^{-k}\tauV_a(\tauM)$ for each $a\in A+\ZZ$ and $k\in\ZZ$,
\item
$\tau\partiall_\tau+a\hc$ is nilpotent on $\gr_a^{\tauV}(\tauM)$ for any $a\in A+\ZZ$,
\item
for any compact set $K\subset X$, there exists a finite subset $A_K\subset A$ such that $\gr_a^{\tauV}(\tauM)|_K=0$ for $a \notin A_K+\ZZ$,
\item
multiplication by $\hc$ is injective on $\gr_a^{\tauV}(\tauM)$ for any $a\in A+\ZZ$ (we also say that $\gr_a^{\tauV}(\tauM)$ has no $\hc$-torsion).
\end{itemize}
\end{definition}

As in the case of Section \ref{subsec:reminderV}, the following holds.

\begin{itemize}
\item
If a $\tauV$-filtration exists for $\tauM$, it is unique.
\item
Any morphism between \sRspe $\tauR_X(*\tau)$-modules is compatible with the $\tauV$-filtration.
\item
We say that a morphism $\varphi:\tauM_1\to\tauM_2$ is \sRspe along $\tau$ if, for each $a\in A+\ZZ$ (index set for both $\tauV_\bbullet(\tauM)_1$ and $\tauV_\bbullet(\tauM)_2$), $\coker\gr_a^{\tauV}\varphi$ has no $\CC[\hc]$-torsion. If $\varphi$ is \sRspe along $\tau$, then the $\tauR_X(*\tau)$-modules $\ker\varphi$, $\image\varphi$ and $\coker\varphi$ are \sRspe along $\tau$ and their $\tauV$-filtration is the one naturally induced by that of $\tauM_1$ or $\tauM_2$.
\end{itemize}

\begin{proposition}\mbox{}\label{prop:rescalV}
\begin{enumerate}
\item\label{prop:rescalV1}
Assume that the $\wt R_X$-module $M$ is $R_X$-coherent and strict and that $\tauM$, as~a $\tauR_X(*\tau)$\nobreakdash-mod\-ule, is \sRspe along $\tau$. Let $\tauV_\bbullet(\tauM)$ denote the corresponding $V$-filtration. Then each $\tauV_a(\tauM)$ is a coherent $\tau$-graded $\tauR'_X\langle\tau\partial_\tau\rangle$-module which satisfies \eqref{eq:basicstar} and \eqref{eq:basicstarstar}.

\item\label{prop:rescalV3}
For $M$ as in \eqref{prop:rescalV1}, if furthermore for some $a\!\in\!\RR$, $\tauV_a(\tauM)$ is coherent over~$\tauR'_X$, then it is so for any $a\in\RR$ and each $\tauV_a(\tauM)$ gives rise to a $\ZZ$-indexed coherent filtration denoted by $F^\irr_{a,\bbullet}\ccM$. The notation is coherent in the sense that, for any $k\in\ZZ$, $\tauV_{a+k}(\tauM)$ gives rise to the $k$-shifted filtration, that is,
\begin{starequation}\label{eq:rescalV2star}
F^\irr_{a+k,\bbullet}\ccM=F^\irr_{a,\bbullet+k}\ccM,
\end{starequation}%
so that we denote it as $F^\irr_{a+\bbullet}\ccM$. Moreover, for any $a,b\in\RR$, we have the filtration property
\begin{starstarequation}\label{eq:rescalV2starstar}
a\leq b\implies F^\irr_{a}\ccM\subset F^\irr_{b}\ccM,
\end{starstarequation}%
so that the family of $\ZZ$-indexed filtrations $(F^\irr_{\alpha+\bbullet}\ccM)_{\alpha\in A}$ forms an $\RR$-indexed filtration $F^\irr_\bbullet\ccM$.
\item
For $M_1,M_2$ as in \eqref{prop:rescalV3}, and any $\wt R_X$-linear morphism $\lambda:M_1\to M_2$, let \hbox{$\taulambda:\tauM_1\to\tauM_2$} be the associated $\wt\tauR_X$-linear morphism. If $\lambda$ is strict and $\taulambda$ is \sRspe along $\tau$, then the $\ccD_X$-linear morphism $\lambda|_{\hb=1}:\ccM_1\to\ccM_2$ is strictly filtered with respect to the $\RR$-indexed filtration $F^\irr_{\cbbullet}\ccM$.
\end{enumerate}
\end{proposition}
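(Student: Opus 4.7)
For Part~\eqref{prop:rescalV1}, the module $\tauM$ carries a natural $\CC^*$-action by multiplication on $\tau$, under which the decomposition $\tauM=\bigoplus_k \tau^k\otimes M$ is a $\ZZ$-grading and the operators $\tau,\hc,\partiall_x,\tau\partiall_\tau$ are homogeneous of degree~$1$. All defining properties of the $V$-filtration along~$\tau$ in Definition~\ref{def:sRspyalg} are $\CC^*$-invariant, hence by uniqueness each $\tauV_a(\tauM)$ is $\CC^*$-stable and therefore $\tau$-graded; the $\tauR'_X\langle\tau\partiall_\tau\rangle$-coherence is just a rewriting of $\tauV_0(\tauR_X(*\tau))$-coherence. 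Property~\eqref{eq:basicstarstar} is immediate from $\tauV_{a+k}(\tauM)=\tau^{-k}\tauV_a(\tauM)$ and $\tauM=\bigcup_b\tauV_b(\tauM)$. For~\eqref{eq:basicstar}, only the inclusion $\tauV_a(\tauM)\cap(\tau-\hc)\tauM\subset(\tau-\hc)\tauV_a(\tauM)$ needs argument: given $x\in\tauM$ with $(\tau-\hc)x\in\tauV_a(\tauM)$ but $x\notin\tauV_a(\tauM)$, pick the minimal $b>a$ with $x\in\tauV_b(\tauM)$; then $\tau x\in\tauV_{b-1}(\tauM)\subset\tauV_{<b}(\tauM)$, so the class of $(\tau-\hc)x$ in $\gr_b^{\tauV}(\tauM)$ equals $-\hc\bar x$, non-zero by $\hc$-injectivity on that graded piece, contradicting $(\tau-\hc)x\in\tauV_a(\tauM)\subset\tauV_{<b}(\tauM)$.

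For Part~\eqref{prop:rescalV3}, we argue locally on a compact $K\subset X$, where $A_K+\ZZ$ is discrete. Going down is immediate: $\tau\cdot:\tauV_a(\tauM)\to\tauV_{a-1}(\tauM)$ is a $\tauR'_X$-linear bijection (since $\tau$ is central in $\tauR'_X$ and $\tauV_{a-1}(\tauM)=\tau\tauV_a(\tauM)$), so $\tauR'_X$-coherence propagates to all smaller indices. To go up to the successor $b$ of $a$ in $A_K+\ZZ$, we have $\tauV_{<b}(\tauM)=\tauV_a(\tauM)$ on $K$, and it suffices to show that $\gr_b^{\tauV}(\tauM)$ is $\tauR'_X$-coherent. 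It is $\tauR'_X\langle\tau\partiall_\tau\rangle$-coherent by Part~\eqref{prop:rescalV1}; nilpotency of $\tau\partiall_\tau+b\hc$ lets one rewrite $(\tau\partiall_\tau)^N e_j$ as a $\CC[\hc]$-combination of $(\tau\partiall_\tau)^k e_j$ for $k<N$, so finitely many $\{(\tau\partiall_\tau)^k e_j\}_{k<N}$ suffice as generators over $\tauR'_X$. Iterating yields $\tauR'_X$-coherence at every index. Proposition~\ref{prop:basic}, whose hypotheses are supplied by Part~\eqref{prop:rescalV1}, then produces the $\ZZ$-indexed coherent filtration $F^\irr_{a,\bbullet}\ccM$ with $R_{F^\irr_a}\ccM=i^*_{\tau=\hc}\tauV_a(\tauM)$. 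The shift identity~\eqref{eq:rescalV2star} follows from $\tauV_{a+k}(\tauM)=\tau^{-k}\tauV_a(\tauM)$: applying $i^*_{\tau=\hc}$ gives $R_{F^\irr_{a+k}}\ccM=\tau^{-k}R_{F^\irr_a}\ccM$ as graded submodules of $\CC[\tau,\taum]\otimes_\CC\ccM$, and comparing degree-$m$ components yields $F^\irr_{a+k,m}\ccM=F^\irr_{a,m+k}\ccM$. For~\eqref{eq:rescalV2starstar}, the inclusion $\tauV_a(\tauM)\subset\tauV_b(\tauM)$ for $a\leq b$ descends through $i^*_{\tau=\hc}$ to an inclusion $R_{F^\irr_a}\ccM\subset R_{F^\irr_b}\ccM$ of graded submodules—by the injectivity afforded by~\eqref{eq:basicstar}—and comparison in degree zero gives $F^\irr_a\ccM\subset F^\irr_b\ccM$.

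For the last assertion, the same argument used for~\eqref{eq:basicstar} shows that any \sRspe $\tauR_X(*\tau)$-module is $(\tau-\hc)$-torsion-free; applied to $\image\taulambda$ and $\coker\taulambda$—both \sRspe by the hypothesis on $\taulambda$—this ensures that $\image\taulambda$ is $(\tau-\hc)$-saturated in $\tauM_2$. Consequently, $i^*_{\tau=\hc}$ commutes both with images of submodules of $\tauM_2$ (by right-exactness) and with the intersection $\tauV_a(\tauM_2)\cap\image\taulambda$ (the relevant $\mathrm{Tor}^1$ vanishes because $\tauM_2$ is $(\tau-\hc)$-torsion-free by Lemma~\ref{lem:tauMstrict}). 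Applying it to the $V$-strict equality $\taulambda(\tauV_a\tauM_1)=\tauV_a\tauM_2\cap\taulambda(\tauM_1)$ supplied by the \sRspy of $\taulambda$ gives $\lambda|_{\hb=1}(R_{F^\irr_a}\ccM_1)=R_{F^\irr_a}\ccM_2\cap\lambda|_{\hb=1}(R_{\ccM_1})$ as graded submodules of $R_{\ccM_2}=\CC[\tau,\taum]\otimes_\CC\ccM_2$; extracting the degree-zero component yields the strict filtered property $\lambda|_{\hb=1}(F^\irr_a\ccM_1)=F^\irr_a\ccM_2\cap\lambda|_{\hb=1}(\ccM_1)$ for every $a\in A+\ZZ$.

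The principal obstacle is in Part~\eqref{prop:rescalV3}: while going down in the $V$-filtration is immediate from centrality of $\tau$ in $\tauR'_X$, going up essentially uses the nilpotency of $\tau\partiall_\tau+b\hc$ on $\gr_b^{\tauV}(\tauM)$ to pass from $\tauR'_X\langle\tau\partiall_\tau\rangle$-coherence to $\tauR'_X$-coherence, and the compactness of $K$ is crucial to make the induction on $A_K+\ZZ$ finite at each step.
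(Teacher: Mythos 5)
Your proof is correct and follows the same overall route as the paper: $\CC^*$-invariance and uniqueness of the $V$-filtration give gradedness, $\hc$-injectivity on $\gr^{\tauV}$ gives \eqref{eq:basicstar}, the shift property of the $V$-filtration gives \eqref{eq:basicstarstar}, and the last part comes down to the $V$-strictness equality \hbox{$\taulambda(\tauV_a\tauM_1)=\tauV_a\tauM_2\cap\taulambda(\tauM_1)$} surviving $i^*_{\tau=\hc}$ thanks to $(\tau-\hc)$-torsion-freeness. The one place where you go further than the paper is in part~\eqref{prop:rescalV3}: the paper simply invokes ``the supplementary coherency assumption'' to apply Proposition~\ref{prop:basic} to every $\tauV_a(\tauM)$, without spelling out why $\tauR'_X$-coherence for a single index propagates to all of them. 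Your argument supplies exactly this: the $\tau$-shift passes $\tauR'_X$-coherence down (and up by integers), and across a single step $a<b$ in $A_K+\ZZ$ you reduce the $\tauR'_X\langle\tau\partiall_\tau\rangle$-generation of $\gr_b^{\tauV}$ to $\tauR'_X$-generation by killing the powers $(\tau\partiall_\tau)^m$ with $m\geq N$ using the nilpotency of $\tau\partiall_\tau+b\hc$ and centrality of $\hc$, then conclude coherence of $\tauV_b$ from the extension by $\tauV_{<b}=\tauV_a$. That is a genuine and useful addition. One small terminological slip: for the last part you attribute $(\tau-\hc)$-torsion-freeness of $\tauM_2$ correctly to Lemma~\ref{lem:tauMstrict}, but for $\coker\taulambda$ it is cleaner to say that strictness of $\lambda$ gives strictness of $\coker\lambda$, hence $\coker\taulambda={}^\tau(\coker\lambda)$ again has $(\tau-\hc)$ injective by that lemma, rather than to re-derive it from the $\hc$-injectivity on graded pieces (which works, but silently uses that $\bigcap_a\tauV_a=0$ so that a minimal index exists).
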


\pagebreak[2]
\begin{proof}\mbox{}
\begin{enumerate}
\item
Recall that $\tau^k\tauV_a(\tauM)=\tauV_{a-k}(\tauM)$ for each $a\in\RR$ and $k\in\ZZ$. Since the $V$\nobreakdash-fil\-tra\-tion is exhaustive, \eqref{eq:basicstarstar} holds.

For \eqref{eq:basicstar}, let $m$ be a local section of $\tauV_a(\tauM)$ such that $m=(\tau-\hc)m'$, where~$m'$ is a local section of $\tauV_b(\tauM)$ for some $b>a$. Then the class of $(\tau-\hc)m'$ in $\gr^{\tauV}_b(\tauM)$ is zero. This is nothing but the class of $-\hc m'$. The \sRspy property implies that $\hc$ acts in an injective way on each $\gr^{\tauV}_b(\tauM)$. Therefore, $m'$ is a local section of $\tauV_{<b}(\tauM)$. Arguing inductively, one eventually finds that $m'$ is a local section of $\tauV_{a}(\tauM)$.

We are left with proving that each $\tauV_{a}(\tauM)$ is $\tau$-graded. This property is equivalent to the fact that each $\tauV_{a}(\tauM)$ is preserved by the $\CC^*$-action on $\tauM$ attached to the grading \ref{def:rescalingalgebraic}\eqref{enum:a}: for any $c\in\CC^*$, we have by definition $c\cdot\bigl(\bigoplus_k\tau^k\otimes m_k\bigr)=\bigoplus_k\tau^k\otimes(c^km_k)$. Since the action of $\ccO_X$ is homogeneous of degree zero and $\tau,\hc,\tau\partiall_\tau,\partiall_x$ are homo\-geneous of degree one, we deduce that, for each $c\in\CC^*$, the filtration $c\cdot\nobreak\tauV_{\bbullet}(\tauM)$ satisfies all properties of the $V$-filtration. Since the $V$-filtration is unique, we conclude that $\tauV_{\bbullet}(\tauM)$ is $\tau$-graded. (We owe this argument to T.\,Mochizuki \cite[Lem.\,6.10]{Mochizuki21}.)

\item
With the supplementary coherency assumption, we can apply Proposition \ref{prop:basic} to each $\tauV_{a}(\tauM)$ and obtain the coherent filtration $F^\irr_{a,\bbullet}\ccM$. Then \eqref{eq:rescalV2star} follows from the equality $\tau^k\tauV_a(\tauM)=\tauV_{a-k}(\tauM)$. Similarly, \eqref{eq:rescalV2starstar} follows from the inclusion $\tauV_a(\tauM)\subset\tauV_b(\tauM)$ if $a\leq b$.\enlargethispage{.5\baselineskip}%

\item
It follows from the assumption of \eqref{prop:rescalV3} on $M_1,M_2$ that, for $C=\ker,\image,\coker$, $C(\taulambda)$ also satisfies it. Moreover, we also find that $C(i^*_{\tau=\hc}\taulambda)=i^*_{\tau=\hc}C(\taulambda)$. We~conclude that for each $a\in\RR$, $i^*_{\tau=\hc}\taulambda:R_{F^\irr_\alpha}M_1\to R_{F^\irr_\alpha}M_2$ is a strict morphism, \ie for each $a\in\RR$ and $k\in\ZZ$, $\lambda|_{\hb=1}(F^\irr_{a+k}M_1)=F^\irr_{a+k}M_2\cap\image\lambda|_{\hb=1}$. This is the desired strictness property.
\qedhere
\end{enumerate}
\end{proof}

\begin{remarque}
Assume that $M$ satisfies the properties of Proposition \ref{prop:rescalV}\eqref{prop:rescalV1}. Let $G:\tauM\to\tauM$ denote the grading operator, that is, $G\bigl(\bigoplus_k\tau^k\otimes m_k\bigr)=\bigoplus_k\tau^k\otimes km_k$. Then each $\tauV_a(\tauM)$ is preserved by $G$, according to Proposition \ref{prop:rescalV}\eqref{prop:rescalV1}, and also by~$\tau G$. On the other hand, $\hc^2\partial_\hc$ acts on $\tauM$ as $\tau G-\tau\partiall_\tau$. It follows that each $\tauV_a(\tauM)$ is acted on by $\hc^2\partial_\hc$, hence is a $V_0\wt\tauR_X$-module.
\end{remarque}

\subsection{Provisional corrected definition of \texorpdfstring{$\IrrMHM$}{IrrMHM}}\label{subsec:IrrMHMprovisional}

We refer to the notations of \cite{Bibi15}. As in \loccit, we replace the category $\MTM^\intt(X)$ of \cite{Mochizuki11} with the category $\iMTM^\intt(X)$, where we have modified the notion of sesquilinear pairing to enable rescaling. Let $X$ be a complex manifold. We now define the category $\IrrMHM (X)$ as a full subcategory of the category of $W$-filtered triples $((M',M'',\iC),W_\bbullet)$, where $M',M''$ are $\wt R_X$-modules which are $R_X$-coherent, $\iC$ is a $\iota$-sesquilinear pairing between the analytifications $\cM^\circ=\wt \cR_{\cX^\circ}\otimes_{\pi^{-1}\wt R_X}\pi^{-1}M$ ($M=M',M''$) in the sense of \cite[\S1.3.e]{Bibi15}, and $W_\bbullet$ is a filtration in such a category. (Morphisms are the usual ones.)

Such a filtered triple can be rescaled: for the components $M',M''$, this is Definition~\ref{def:rescalingalgebraic}, and for the sesquilinear pairing $\iC$, this is defined in \cite[\S2.2.c]{Bibi15}. We~denote the rescaled object by ${}^\tau((M',M'',\iC),W_\bbullet)$. The objects of $\IrrMHM (X)$ are those $W$-filtered triples $((M',M'',\iC),W_\bbullet)$ such that
\begin{enumerate}
\item\label{enum:oldIrrMHM1}
the analytifications $((\taucM',\taucM'',\tauiC),W_\bbullet)$ of ${}^\tau((M',M'',\iC),W_\bbullet)$ is an object of the category $\iMTM^\intt(\tauX,(*0_\tau))$,
\item\label{enum:oldIrrMHM2}
the $\cR_{\taucX}(*0_\tau)$-modules $\taucM',\taucM''$, which are strictly $\RR$-specializable along~$\tau$ because of \eqref{enum:oldIrrMHM1}, are also regular along $\tau$ and graded (\cf\cite[Def.\,2.19 \& Def.\,2.26]{Bibi15}).
\end{enumerate}

With this definition of $\IrrMHM(X)$, \ie assuming that the $W$-filtered triples we start with are algebraic in the $\hb$-direction, the results of \cite{Bibi15} hold true, except that it is mentioned in \cite[Th.\,0.2]{Bibi15} that $\IrrMHM(X)$ is a \emph{full} abelian subcategory of $\iMTM^\intt(X)$. Fullness is no longer true since morphisms should have components which are morphisms of $\wt R_X$-modules, while in $\iMTM^\intt(X)$ the components are \hbox{morphisms} of $\wt\cR_\cX$-modules. We will recover fullness with the improved definition of Section \ref{subsec:improved}.

The examples considered in \loccit\ all come with a natural partial $\hb$-algebraic structure. This directly follows from one of the results of \cite{Mochizuki21} (\cf Corollary \ref{cor:Malgrangeext} below). For the sake of completeness, we give a direct proof of the following lemma in order to ensure that the results of \cite{Bibi15} do not depend on those of \cite{Mochizuki21}, although the latter results enable us to give a better definition of $\IrrMHM(X)$ in Section \ref{subsec:improved}.

\begin{lemme}\label{lem:partialalg}
Let $\varphi$ be a meromorphic function on $X$ and let $\cT$ be the mixed twistor D-module associated to a mixed Hodge module. Then the $\wt\cR_\cX$-module underlying the integrable twistor D-module $\cT^{\varphi/\hb}\otimes\cT$ (\cf\cite[\S1.6.a]{Bibi15}) has a natural partial $\hb$-algebraic structure.
\end{lemme}

\begin{proof}[Sketch of proof]
The assertion is clear for $\cT$ alone: by definition, for a filtered $\cD_X$\nobreakdash-mod\-ule $(\ccM,F_\bbullet\ccM)$ as the one underlying a mixed Hodge module, the associated $R_X$\nobreakdash-mod\-ule is the Rees module $R_F\ccM$. It is thus enough to check the assertion for~$\cT^{\varphi/\hb}$. Let $P$ be the pole divisor of $\varphi$ and let $e:X'\to X$ be a projective modification which is an isomorphism over $U:=X\moins P$ and such that
\begin{itemize}
\item
$X'$ is smooth and $e^{-1}(P)$ is a divisor with normal crossings,
\item
$\varphi:=\varphi\circ e$ extends as a projective morphism $\varphi':X'\to\PP^1$,
\item
$e^{-1}(P)$ decomposes as $e^{-1}(P)=P'\cup H'$, where $P'$ denotes the pole divisor of~$\varphi'$.
\end{itemize}
In the setting of $\cR_{\cX'}$-modules, we set $\cE^{\varphi'/\hb}=(\cO_{\cX'}(*P'),\hb\rd+\rd\varphi')$ and we denote by $\cE^{\varphi'/\hb}[*H']$ the twistor localization of $\cE^{\varphi'/\hb}$ along $H'$, which underlies the mixed twistor D-module $\cT^{\varphi'/\hb}[*H']$ (\cf\cite[Prop.\,11.2.2]{Mochizuki11}, \cf also the proof of \cite[Prop.\,3.3]{S-Y14}). A local computation, using the fact that the variables involved in $\varphi'$ and in $H'$ can be made distinct, shows that $\cE^{\varphi'/\hb}[*H']\simeq\cE^{\varphi'/\hb}\otimes_{\cO_\cX}(\cO_\cX,\rd)[*H']$.

On the one hand, $\cE^{\varphi'/\hb}$ has the natural $\hb$-algebraic structure $(\cO_{X'}[\hb](*P'),\hb\rd+\rd\varphi')$ and, on the other hand, $(\cO_\cX,\rd)[*H']$ has a natural $\hb$-algebraic structure since it is the analytification of the Rees module of a filtered $\cD_{X'}$-module underlying a mixed Hodge module. Therefore, the tensor product $\cE^{\varphi'/\hb}[*H']$ of both also comes equipped with a natural algebraic structure. Last, the algebraic structure of $\cE^{\varphi/\hb}$ is obtained by pushforward by $e$ of the latter.
\end{proof}

\section{Improved definition of \texorpdfstring{$\IrrMHM$}{IrrMHM}}
Although the setting of Section \ref{subsec:rescalingalgebraic} leads in a direct way to the construction of the irregular Hodge filtration, the objects of $\IrrMHM(X)$ are analytic objects, and in order to improve the definition of $\IrrMHM(X)$ we have to take care that some properties needed in Proposition \ref{prop:rescalV} only hold in the algebraic setting. In this section, we~explain the various comparison results needed to adapt Proposition \ref{prop:rescalV} in the analytic setting before giving the improved definition of $\IrrMHM(X)$.

\Subsection{Goodness and extension to infinity of strict holonomic \texorpdfstring{$\wt\cR_\cX$}{wRX}-modules}
Let $\cM$ be a coherent $\cR_\cX$\nobreakdash-mod\-ule. We say that it is \emph{strict} if it has no $\cO_{\CC_\hb}$-torsion. We say that it is holonomic if its characteristic variety is contained in $\Lambda\times T^*\CC_\hb$ for some Lagrangian closed analytic set $\Lambda\subset T^*X$. For a coherent $\wt\cR_\cX$-module $\cM$, we~say that it is strict, \resp holonomic, if the underlying $\cR_\cX$-module is so.

\begin{proposition}[Goodness]\label{prop:holgood}
Let $\cM$ be a strict holonomic $\wt\cR_\cX$-module. Then $\cM$ is good. More precisely, $\cM$ is the union of an increasing sequence of coherent $\cO_{\cX}$\nobreakdash-sub\-modules (with no reference to a compact set).
\end{proposition}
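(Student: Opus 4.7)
My plan is to embed $\cM$ into a coherent $\wt\cR_\fX$-module on the partial completion $\fX=X\times\PP^1$, to use the compactness of the $\PP^1$-factor to cover the extended module by a global increasing sequence of coherent $\cO_\fX$-submodules, and then to descend back to $\cM$ by Proposition \ref{prop:subcoh}.

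First, strictness of $\cM$ gives an injection $\cM\hookrightarrow\cM(*0_\hb)$, whose target is coherent over $\wt\cR_\cX(*0_\hb)=\cD_\cX(*0_\hb)$. The hypothesis that the characteristic variety of $\cM$ is contained in $\Lambda\times T^*\CC_\hb$ for some Lagrangian $\Lambda\subset T^*X$ means that, away from $\hb=0$, $\cM(*0_\hb)$ is a holonomic $\cD_X$-module depending algebraically on $\hb\in\CC^*$. This is the setup in which one can hope for a meromorphic extension across $\infty_\hb$.

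Next, I would invoke Mochizuki's meromorphic extension theorem (the result that the paper cites as Corollary \ref{cor:Malgrangeext}) to extend $\cM(*0_\hb)$ to a coherent $\wt\cR_\fX$-module $\wt\cM$ on $\fX$, with $\wt\cM|_\cX=\cM(*0_\hb)$ and behavior along $\infty_\hb$ controlled by the inclusion $\wt\cR_\fX\subset\cD_\fX(*(0_\hb\cup\infty_\hb))$. Because the fibers of the projection $\fX\to X$ are the compact $\PP^1$, one can then globally generate $\wt\cM$ over $\wt\cR_\fX$ by a single coherent $\cO_\fX$-submodule $\wt{\mathcal G}$, working locally on a Stein cover of $X$, using compactness of $\PP^1$ to globalize fiberwise, and patching via Cartan's Theorem B applied to the $\cO_\fX$-coherent kernels. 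Filtering by the order of differential operators, $\wt{\mathcal G}_i:=F_i(\wt\cR_\fX)\cdot\wt{\mathcal G}$ is then an increasing sequence of coherent $\cO_\fX$-submodules with $\wt\cM=\bigcup_i\wt{\mathcal G}_i$; restriction to $\cX$ writes $\cM(*0_\hb)=\bigcup_i(\wt{\mathcal G}_i|_\cX)$ as such a union.

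Finally, Proposition \ref{prop:subcoh} applies with $\cA_\cZ=\wt\cR_\cX$, $\cB_\cZ=\wt\cR_\cX(*0_\hb)$, $\cN=\cM(*0_\hb)$, and $\cN'=\cM$, and yields $\cM=\bigcup_i\bigl(\cM\cap\wt{\mathcal G}_i|_\cX\bigr)$ with each intersection coherent over $\cO_\cX$, which is the claim. The main obstacle is the meromorphic extension step: starting from strict holonomy alone, one must produce a coherent $\wt\cR_\fX$-extension across $\infty_\hb$, a genuinely analytic input relying on the integrable structure (the action of $\hb^2\partial_\hb$ inside $\wt\cR_\cX$) together with the holonomy that controls the fiberwise $\cD_X$-module structure. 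The global generation by $\wt{\mathcal G}$ is also delicate because $X$ itself is not compact; the compactness of the $\PP^1$-factor is what ultimately makes it possible.
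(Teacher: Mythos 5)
Your proposal shares the final descent step (using Proposition \ref{prop:subcoh} applied to the inclusion $\cM\subset\cM(*0_\hb)$ given by strictness), but the path you take to exhibit $\cM(*0_\hb)$ as a union of coherent $\cO$-submodules has a genuine gap and is also more roundabout than necessary.

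The gap: you invoke Mochizuki's Malgrange extension theorem (the content of Theorem \ref{th:Malgrangeext} and Corollary \ref{cor:Malgrangeext}) to extend $\cM(*0_\hb)$ across $\infty_\hb$. That theorem requires $\cM$ to underlie an \emph{integrable mixed twistor D-module}, a drastically richer structure than strict holonomy. Proposition \ref{prop:holgood} is stated for an arbitrary strict holonomic $\wt\cR_\cX$-module, and indeed Proposition \ref{prop:fM} is careful \emph{not} to assert the existence of an $\wt\cR_\fX$-extension in this generality: it only sets up a correspondence assuming such an extension exists. So your route cannot close under the stated hypotheses. (Your caveat at the end shows you sensed this difficulty, but it is not a matter of being ``delicate''; it is a missing hypothesis.)

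The unnecessary detour: you pass to $\fX=X\times\PP^1$ to exploit compactness of the $\PP^1$-fiber and then worry that $X$ is not compact. In fact no compactification is needed at all. Lemma \ref{lem:holRD} (via Kashiwara) shows $\cM(*0_\hb)$ is a holonomic $\cD_\cX$-module on $\cX=X\times\CC_\hb$ itself, and Malgrange's theorem \cite[Th.\,II.3.1]{Malgrange04} states that a holonomic $\cD$-module on \emph{any} complex manifold admits a global coherent filtration, with no compactness hypothesis. That gives the required increasing sequence of coherent $\cO_\cX$-submodules of $\cM(*0_\hb)$ directly, and then Proposition \ref{prop:subcoh} finishes, exactly as you do in your last paragraph. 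So the extension to $\fX$ and the fiberwise global generation argument are superfluous, and the part that is not superfluous rests on an assumption you do not have.
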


\begin{lemme}\label{lem:holRD}
Let $\cM$ be a strict holonomic $\wt\cR_\cX$-module. Then the localized module $\cM(*0_\hb)$, regarded as a $\cD_\cX$-module, is holonomic.
\end{lemme}

\begin{proof}
Since we have the identification $\wt\cR_\cX(*0_\hb)=\cD_\cX(*0_\hb)$, we regard the localized module $\cM(*0_\hb):=\cO_\cX(*0_\hb)\otimes_{\cO_\cX}\cM$ as a $\cD_\cX(*0_\hb)$-module, hence as a $\cD_\cX$-module.

It has been observed in \cite[Prop.\,1.26]{Bibi15}, as a consequence of the main theorems of \cite{Kashiwara78}, that, as such, $\cM(*0_\hb)$ is $\cD_\cX$-holonomic.
\end{proof}

\begin{proof}[Proof of Proposition \ref{prop:holgood}]
Lemma \ref{lem:holRD}, together with \cite[Th.\,II.3.1]{Malgrange04}, implies that $\cM(*0_\hb)$ admits a coherent filtration, and in particular it is the union of an increasing sequence of coherent $\cO_{\cX}$-submodules $\cF_k$. By strictness, we have $\cM\subset\cM(*0_\hb)$. We~conclude with Proposition \ref{prop:subcoh}.
\end{proof}

\begin{proposition}\label{prop:fM}
Let $\cM$ be a strict holonomic $\wt\cR_\cX$-module. There is a one-to-one correspondence between the following objects:
\begin{enumerate}
\item\label{prop:fM1}
An $\wt\cR_\fX$-coherent extension $\fM$ of $\cM$ to $X\times\PP^1$ (\cf Notation \ref{subsec:nota}\eqref{nota:3}),
\item\label{prop:fM2}
a coherent $\wt R_X$-module $M$ such that $\cM=\wt\cR_\cX\otimes_{\pi^{-1}\wt R_X}\pi^{-1}M$.
\end{enumerate}
\end{proposition}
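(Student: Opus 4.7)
The plan is to establish two mutually inverse constructions: an analytic tensor extension going from~\eqref{prop:fM2} to~\eqref{prop:fM1}, and the proper direct image $\pi_*$ going from~\eqref{prop:fM1} to~\eqref{prop:fM2}. As a preliminary step I would construct a natural morphism of sheaves of rings $\pi^{-1}\wt R_X\to\wt\cR_\fX$ on $\fX$: over $\cX$ it is the analytification in $\hb$, while over $\fX^\circ$ an element of $\wt R_X$, being polynomial in $\hb$ and in $\hb^2\partial_\hb$, defines a section of $\cD_{\fX^\circ}(*\infty_\hb)=\wt\cR_\fX|_{\fX^\circ}$; compatibility on $\cX^\circ$ is automatic. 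A direct local computation---a section of $\wt\cR_\fX$ over $U\times\PP^1$ is holomorphic in $\hb$ on $\CC_\hb$ and has at worst polynomial growth at $\infty_\hb$, hence is polynomial in $\hb$---then yields $\pi_*\wt\cR_\fX=\wt R_X$.

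For the direction \eqref{prop:fM2}$\Rightarrow$\eqref{prop:fM1}, given $M$ coherent over $\wt R_X$, I would set
\[
\fM:=\wt\cR_\fX\otimes_{\pi^{-1}\wt R_X}\pi^{-1}M.
\]
Coherence of $\fM$ over $\wt\cR_\fX$ follows locally by tensoring a finite presentation of $M$, and $\fM|_\cX=\cM$ follows from the defining relation in~\eqref{prop:fM2}.

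For the reverse direction \eqref{prop:fM1}$\Rightarrow$\eqref{prop:fM2}, I would set $M:=\pi_*\fM$, which carries a natural $\wt R_X$-action by the preliminary step. The main obstacle will be the coherence of $M$ over $\wt R_X$. Here I would proceed via a relative GAGA-type argument tailored to the asymmetric sheaf $\wt\cR_\fX$: since $\fM$ is $\wt\cR_\fX$-coherent on the proper fibration $\pi:\fX\to X$, and since $\wt\cR_\fX|_{\fX^\circ}=\cD_{\fX^\circ}(*\infty_\hb)$ only permits polynomial growth of coefficients at $\infty_\hb$, local generators of $\fM$ in a neighborhood of $\infty_\hb$ have bounded pole order when expressed in terms of local generators over $\cX$. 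Consequently, for $k$ sufficiently large, local generators of $\fM$ on $U\times\CC_\hb$ define, after multiplication by a suitable power of $\hb$, sections over $U\times\PP^1$ that---together with generators coming from near $\infty_\hb$---furnish a finite family of generators of $M|_U$ over $\wt R_X|_U$. The \v{C}ech complex for the two-chart cover $U\times\PP^1=(U\times\CC_\hb)\cup(U\times(\PP^1\moins\{0\}))$ combined with the vanishing $H^1(\PP^1,\cO_{\PP^1}(k))=0$ for $k\geq-1$ ensures these sections indeed generate $M|_U$.

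Finally, to verify that the constructions are mutually inverse, the projection formula together with $\pi_*\wt\cR_\fX=\wt R_X$ gives $\pi_*\bigl(\wt\cR_\fX\otimes_{\pi^{-1}\wt R_X}\pi^{-1}M\bigr)=M$ in one direction. Conversely, the adjunction map $\wt\cR_\fX\otimes_{\pi^{-1}\wt R_X}\pi^{-1}\pi_*\fM\to\fM$ is an isomorphism on $\cX$ by the defining property of~\eqref{prop:fM2}, and on $\fX^\circ$ by the coherence of $\fM|_{\fX^\circ}$ as a $\cD_{\fX^\circ}(*\infty_\hb)$-module together with the fact that, after inverting $\hb$, the global sections over $U\times\PP^1$ produced in the previous paragraph generate $\fM|_{\fX^\circ}$ locally.
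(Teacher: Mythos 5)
The easy direction \eqref{prop:fM2}$\Rightarrow$\eqref{prop:fM1} is the same as the paper's. For the hard direction, your target $M=\pi_*\fM$ is correct, but the argument you give for coherence of $M$ over $\wt R_X$ has a genuine gap. The assertion that ``local generators of $\fM$ in a neighborhood of $\infty_\hb$ have bounded pole order when expressed in terms of local generators over $\cX$'' is not a consequence of $\wt\cR_\fX$-coherence alone, and you offer no argument for it. Your \v{C}ech step with the vanishing $H^1(\PP^1,\cO_{\PP^1}(k))=0$ is designed for a module that is $\cO_\fX$-coherent, or at least carries a global $\cO_\fX$-good filtration, in the fiber direction of $\pi$; but $\fM$ is only assumed coherent over $\wt\cR_\fX$, a ring containing differential operators of unbounded order in the $X$-directions, and coherence over such a ring does not by itself control orders of poles of transition matrices relative to~$\cO_\fX$. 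A telltale sign is that your argument never uses either the \emph{strictness} or the \emph{holonomicity} of $\cM$, both of which enter irreplaceably in the actual proof. In effect you assume the conclusion of the hard step (a global good $\cO_\fX$-filtration on $\fM$) in order to prove it.

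The paper's route is to manufacture precisely such a filtration. One first considers $\fM(*0_\hb)$, a coherent $\cD_\fX(*(0_\hb\cup\infty_\hb))$-module which is holonomic on $\cX^\circ$ because $\cM$ is; by Kashiwara's theorem [Kas78] it is then $\cD_\fX$-holonomic, and Malgrange's theorem [Mal04, Th.~II.3.1] supplies a \emph{global} coherent $\cO_\fX$-filtration of $\fM(*0_\hb)$. Strictness of $\cM$ gives the inclusion $\fM\subset\fM(*0_\hb)$, and Proposition~\ref{prop:subcoh} descends the good filtration to $\fM$. Only at that point is a relative-GAGA argument of the kind you sketch available, and what the paper invokes for the final extraction of $M$ is the argument of [D-S02a, Th.~A.1]. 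If you want to rescue your write-up, you should insert this Kashiwara--Malgrange--Proposition~\ref{prop:subcoh} step before the \v{C}ech computation; without it, the ``bounded pole order'' claim is unsupported.
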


\begin{proof}
Assume \eqref{prop:fM2}. Clearly, given $M$ defining $\cM$, we can extend $\cM$ by setting
\[
\fM:=\wt\cR_\fX\otimes_{\pi^{-1}\wt R_X}\pi^{-1}M.
\]

On the other hand, given $\fM$ as in \eqref{prop:fM1}, $\fM(*0_\hb)$ is a coherent $\cD_{\fX}(*(0_\hb\cup\nobreak\infty_\hb))$-mod\-ule whose restriction to $\cX^\circ$ is holonomic. According to \cite[Th.\,1.2 \& 1.3]{Kashiwara78}, it is therefore $\cD_\fX$-holonomic and \cite[Th.\,II.3.1]{Malgrange04} ensures that it has a global coherent filtration by $\cO_\fX$-submodules. Since $\cM$ is strict, we have $\fM\subset\fM(*0_\hb)$ and the argument already used in Proposition \ref{prop:holgood} implies that $\fM$ also has a coherent filtration. Then the argument of \cite[Th.\,A.1]{D-S02a} extends to the present setting in order to construct $M$.
\end{proof}

\subsection{Results of T.\,Mochizuki in \cite{Mochizuki21}}\label{subsec:resultsMochizuki}

The first improvement obtained in \cite{Mochizuki21} enables us to avoid assuming a priori that the objects we start with are algebraic in the $\hb$-direction. When $X$ is reduced to a point, this was already done in \cite[Chap.\,3]{Bibi15} by using the notion of Deligne meromorphic extension. When $\dim X\geq1$, as we work with possibly irregular holonomic D-modules, the existence of such an extension is much less obvious, and is called the \emph{Malgrange extension} in \cite{Mochizuki21}.

We recall that an integrable mixed twistor D-module is a $W$-filtered triple $((\cM',\cM'',\iC),W_\bbullet)$, where $\cM',\cM''$ are strict holonomic $\wt\cR_\cX$-modules and $\iC$ is an integrable sesquilinear pairing (that can replace $C$ of \cite{Mochizuki11} in the integrable case, see \cite[Chap.\,1]{Bibi15}).

\begin{theoreme}[{\cite[Th.\,1.3]{Mochizuki21}}]\label{th:Malgrangeext}
Assume that $\cM=\cM',\cM''$ underlies an integrable mixed twistor D-module. Then there exists a unique Malgrange extension of $\cM$, that~is, a holonomic $\wt\cR_\fX$-module $\fM$ which is regular along $\infty_\hb$. This correspondence is functorial with respect to morphisms in $\iMTM^\intt(X)$.\qed
\end{theoreme}

\begin{corollaire}\label{cor:Malgrangeext}
Any integrable mixed twistor D-module $((\cM',\cM'',\iC),W_\bbullet)$ comes canonically, by analytification, from a $W$-filtered triple $((M',M'',\iC),W_\bbullet)$.
\end{corollaire}

\begin{proof}
This is a consequence of Theorem \ref{th:Malgrangeext} and Proposition \ref{prop:fM}.
\end{proof}

Next, in \cite[\S2.3.d]{Bibi15}, a \emph{well-rescalability} property property is assumed for the objects of $\IrrMHM(X)$. The following result of \cite{Mochizuki21} asserts that, by using the canonical $\hb$-algebraic representative for rescaling an object of $\iMTM^\intt(X)$, this property is automatically satisfied.

\begin{theoreme}[{\cite[Th.\,1.5]{Mochizuki21}}]\label{th:wellresc}
Assume that $\cM$ underlies an integrable mixed twistor D-module on $X$. Let $M$ be the $\hb$-algebraic representative of its Malgrange extension, let $\tauM$ be the rescaled object (\cf Definition\ref{def:rescalingalgebraic}) and let $\taucM$ denotes its analytification, which is an $\wt\cR_{\taucX}(*0_\tau)$-module. Assume furthermore that $\taucM$ is the naive localization along~$\tau$ of an $\wt\cR_{\taucX}$\nobreakdash-module which underlies a mixed twistor D-module on $\tauX$. Then~$\taucM$ is regular along $\tau$, \ie $\cM$ is well-rescalable in the sense of \cite[\S2.3.d]{Bibi15}.\qed
\end{theoreme}

\subsection{Specializability along \texorpdfstring{$\tau$}{tau0}}
In \loccit, a grading property (\cite[Def.\,2.26]{Bibi15}) is also imposed in the definition of $\IrrMHM$. It was noticed in \cite{Mochizuki21} that this property is automatically satisfied (invariance under a $\CC^*$-action). We explain this property in the setting considered above.

We take up the setting of Section \ref{sec:rescalingalgebraic}. Let $M$ be a \emph{strict} $\wt R_X$-module (\ie with no $\CC[\hb]$-torsion). We say that $M$ is a \emph{holonomic $\wt R_X$-module} if its analytification $\cM:=\wt\cR_\cX\otimes_{\pi^{-1}\wt R_X}\pi^{-1}M$ is so, that is, the characteristic variety of $\cM$ is contained in $\Lambda\times T^*\CC_\hb$ for some closed Lagrangian analytic subset~$\Lambda$ of $T^*X$. Let $\tauM$ be the rescaled object (\cf Definition~\ref{def:rescalingalgebraic}) and let $\taucM$ denotes its analytification on $\taucX$, which is an $\wt\cR_{\taucX}(*0_\tau)$-module. The notion of \sRspy along $\tau$ in this analytic setting is similar to that of Definition \ref{def:sRspyalg}, and the properties of the $\tauV$-filtration are similar. In this section we prove the next proposition.

\begin{proposition}\label{prop:graded}
Assume that $M$ is holonomic, strict, and that $\taucM$ is \sRspe along $\tau$. Then $\tauM$ is \sRspe along $\tau$ and we have, for each $a\in A+\ZZ$,
\[
\tauV_a(\taucM)=V_0(\cR_{\taucX})\otimes_{q^{-1}V_0(\tauR_X)}q^{-1}\tauV_a(\tauM),
\]
where $q$ is the projection $\taucX\to X$.
\end{proposition}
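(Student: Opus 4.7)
My plan is to construct the algebraic $V$-filtration on $\tauM$ by descent from its analytic counterpart $\tauV_\bbullet(\taucM)$ and to verify the axioms of Definition \ref{def:sRspyalg}. Strictness of $M$, combined with the flatness statement in Notation \ref{subsec:nota}\eqref{nota:7}, ensures that the natural map $q^{-1}\tauM\hookrightarrow\taucM$ is injective and induces an injection $\tauM\hookrightarrow q_*\taucM$; I therefore set
\[
\tauV_a(\tauM):=\tauM\cap q_*\tauV_a(\taucM)\quad(a\in A+\ZZ).
\]
The axioms of Definition \ref{def:sRspyalg} not involving coherence — compatibility with $\tau$-multiplication, nilpotence of $\tau\partiall_\tau+a\hc$ on $\gr_a^{\tauV}(\tauM)$, exhaustivity, local finiteness of $A$, and $\hc$-injectivity on the graded pieces — transfer directly from the analytic side via \eqref{nota:7}, since each is faithfully detected by the flat analytification functor.

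The delicate point is the $V_0(\tauR_X)$-coherence of each $\tauV_a(\tauM)$. The first step is to establish the algebraic analogue of Proposition \ref{prop:holgood} for $\tauM$, namely that $\tauM$ is the union of an increasing sequence of coherent $\ccO_X[\tau,\hc]$-submodules. This is obtained by adapting the proof of Proposition \ref{prop:holgood} to the partially algebraic setting: the localization $\tauM(*\hc)$ is a coherent $\cD_X$-type module (essentially via Lemma \ref{lem:tauMstrict} and the definition of rescaling), hence admits a good filtration in the sense of \cite[Th.\,II.3.1]{Malgrange04}, and strictness together with Proposition \ref{prop:subcoh} (in the version valid in the setting \eqref{nota:5}--\eqref{nota:6}, per the remark following that proposition) descends this to $\tauM$. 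With this goodness in hand, Proposition \ref{prop:subcoh} applied to the pair $V_0(\tauR_X)\subset\tauR_X(*\tau)$ reduces $V_0(\tauR_X)$-coherence of $\tauV_a(\tauM)$ to local finite generation, which in turn follows by combining the analytic $V_0(\cR_{\taucX})$-coherence of $\tauV_a(\taucM)$ with the $\CC^*$-invariance afforded by the $\tau$-grading of Definition \ref{def:rescalingalgebraic}\eqref{enum:a} — precisely the mechanism used in Proposition \ref{prop:rescalV}\eqref{prop:rescalV1} following \cite[Lem.\,6.10]{Mochizuki21}.

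The stated formula then follows in exact analogy with the proof of Lemma \ref{lem:VVan}: set $U_a\taucM:=V_0(\cR_{\taucX})\otimes_{q^{-1}V_0(\tauR_X)}q^{-1}\tauV_a(\tauM)$; by the flatness of $V_0(\cR_{\taucX})$ over $q^{-1}V_0(\tauR_X)$ recorded in Notation \ref{subsec:nota}\eqref{nota:7}, the filtration $U_\bbullet\taucM$ inherits all the characteristic $V$-filtration properties from $\tauV_\bbullet(\tauM)$, so uniqueness of the $V$-filtration of $\taucM$ forces $U_a\taucM=\tauV_a(\taucM)$. The main obstacle is the coherence step, and more specifically the descent of finite generation from the analytic setting on $\taucX$ to the partially algebraic setting on $X$, where the $\CC^*$-invariance of the $V$-filtration is indispensable.
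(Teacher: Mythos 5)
Your construction of $\tauV_a(\tauM)$ as $\tauM\cap q_*\tauV_a(\taucM)$ is a plausible ansatz, and the non-coherence axioms do transfer through the faithfully flat analytification functor as you say. The gap is in the coherence/algebraicity step, which is exactly the heart of the proposition, and your argument there does not go through in two respects.

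First, Proposition \ref{prop:subcoh} does not reduce $V_0(\tauR_X)$-coherence to local finite generation: it \emph{assumes} coherence of the $\cA_\cZ$-submodule and concludes goodness. You are using it in the reverse direction, and that inference is not established anywhere in the paper. Second, and more fundamentally, you cannot descend from an analytic $V_0(\cR_{\taucX})$-coherent filtration on $\taucX$ to an algebraic one on $X$ just by invoking the $\CC^*$-invariance of Proposition \ref{prop:rescalV}\eqref{prop:rescalV1}: that argument proves that an \emph{already algebraic} V-filtration on $\tauM$ must be $\tau$-graded, by the uniqueness of the V-filtration acted on by $\CC^*$ — it is not a GAGA-type statement and does not convert holomorphic $\tau$-dependence into polynomial $\tau$-dependence. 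The paper handles this by an entirely different mechanism: it extends $\taucM(*0_\hc)$ to a holonomic $\cD$-module $\taufM(*0_\hc)$ on the compactification $\ov\taufX=X\times\PP^1_\tau\times\PP^1_\hc$, takes its Kashiwara-Malgrange filtration, proves a separate lemma showing that on $\taucX^\circ$ this $\cD$-module V-filtration agrees with the assumed $\cR$-module V-filtration (this is where $\CC^*$-invariance is actually used, to show $\partial_\hc$ is expressible via $\hcm\tau\partial_\tau$ so that the KM terms are $\tauV_0\cR$-coherent), and then glues $\tauV_\bbullet(\taucM)$ with $\tauV_\bbullet(\taufM(*0_\hc))$ to get a good $\tauV_0\cR_{\ov\taufX}$-module on the compact space, to which the equivalence of \cite[Th.\,A.1]{D-S02a} applies via $q_*$. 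The ordering also matters: the paper first gets coherence (via the gluing and holonomicity of the compactified $\cD$-module), then goodness via Proposition \ref{prop:subcoh}, and only then algebraizes via $q_*$ — whereas you try to get goodness first and leverage it for coherence, which inverts a dependency you have not justified. Without the compactification and the resulting GAGA equivalence, there is no way to conclude that the intersection $\tauM\cap q_*\tauV_a(\taucM)$ is $V_0(\tauR_X)$-coherent.
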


\begin{corollaire}\label{cor:graded}
Under the assumptions of Proposition \ref{prop:graded}, the $\cR_\cX$-module $\cM=\cR_\cX\otimes_{\pi^{-1}R_X}\pi^{-1}M$ is graded in the sense of \cite[Def.\,2.26]{Bibi15}.
\end{corollaire}

Note that here we do not take care of the condition of \emph{well}-rescalability.

\begin{proof}[Proof of Corollary \ref{cor:graded}]
Each $\tauV_a(\tauM)$ is $\tau$-graded, according to Proposition \ref{prop:rescalV}\eqref{prop:rescalV1}, so~grading $\tauV_a(\taucM)$ with respect to the $\tau$-adic filtration yields $\tauV_a(\tauM)$ and the identification of Proposition \ref{prop:graded}, after restricting it by the functor $i^*_{\tau=\hc}$, implies the grading property.
\end{proof}

\subsubsection*{Proof of Proposition \ref{prop:graded}}
If $M$ is holonomic and strict, then so is $\cM$, and its localized module $\cM(*0_\hb)$ is a holonomic $\cD_\cX$-module (Lemma \ref{lem:holRD}). Furthermore, the analytification $\fM(*0_\hb)$ on $\fX=X\times\PP^1_\hb$, which is a coherent $\cD_{\fX}(*(0_\hb\cup\infty_\hb))$-module, is~also $\cD_{\fX}$-holonomic, according to \cite{Kashiwara78}, and good according to \cite[Th.\,II.3.1]{Malgrange04}. By~apply\-ing the functor $\pi_*$ (\cf \cite[Th.\,A.1]{D-S02a}), we deduce that $M(*\hb)$ is $\ccD_X[\hb]\langle\partial_\hb\rangle$-holonomic (holonomicity means that $\mathrm{Ext}^k_{\ccD_X[\hb]\langle\partial_\hb\rangle}(M(*\hb),\ccD_X[\hb]\langle\partial_\hb\rangle)=0$ for $k\neq\dim X+1$), and also as a $\ccD_X[\hb,\hbm]\langle\hb\partial_\hb\rangle$-module. We will use similar arguments to obtain the $\tauV$-filtration of $\tauM$ from that of $\taucM$.

The rescaled module $\tauM$ is a $\ccD_X[\tau,\taum,\hc]\langle\partiall_\tau,\hc^2\partial_\hc\rangle$-module. We will analytify it as a sheaf $\taufM$ on $X\times\PP^1_\tau\times\PP^1_\hc$ over the coherent sheaf of rings $\wt\cR_{\ov\taufX}(*0_\tau)$ (\cf Notation~\ref{subsec:nota}\eqref{nota:5}).

The rescaled module ${}^\tau\!(M(*\hb))$ is a $\ccD_X[\tau,\taum,\hc,\hcm]\langle\tau\partial_\tau,\hc\partial_\hc\rangle$-module equal to $\tauM(*\hc)$ and is identified to the pullback $D$-module of $M(*\hb)$ by the map \hbox{$(\hc,\tau)\mto\hb=\hc\taum$}. In~par\-ticular it is also holonomic, and has a decomposition
\begin{equation}\label{eq:decomptauMhcm}
\tauM(*\hc)=\bigoplus_k(\tau^k\otimes M(*\hb)).
\end{equation}
Furthermore, since $\tau^k\otimes M(*\hb)$ is identified with $\ker(\hc^2\partial_\hc+\tau\partiall_\tau-k\hc)$, we deduce that $\tauM(*\hc)$ is $\tauR_X(*\hc)$-coherent (\ie the action of $\partial_\hc$ can be expressed in terms of that of $\hcm\tau\partial_\tau$). Its analytification is denoted by $\taufM(*0_\hc)$.

From \cite{Kashiwara78} we deduce:

\begin{corollaire}\mbox{}
\begin{enumerate}
\item
The analytification $\taufM(*0_\hc)$ on $X\times\PP^1_\tau\times\PP^1_\hc$ of $\tauM(*\hc)$ is a holonomic $\cD_{X\times\PP^1_\tau\times\PP^1_\hc}$-module which is equal to its localization at $0_\tau\cup\infty_\tau\cup0_\hc\cup\infty_\hc$.
\item
The restriction $\taucM(*0_\hc)$ of $\taufM(*0_\hc)$ to $\taucX$ is a holonomic $\cD_{\taucX}$-module equal to its localization along $0_\tau\cup0_\hc$. It is also equal to the analytification of $\tauM(*\hc)$ on~$\taucX$.
\qed
\end{enumerate}
\end{corollaire}

Note also that the restriction of $\taucM(*0_\hc)$ or $\taufM(*0_\hc)$ to the open subset $\taucX^\circ$ is equal to the restriction of $\taucM$ or $\taufM$ to~$\taucX^\circ$. Under the assumptions of Proposition~\ref{prop:graded}, we have the following properties.
\begin{itemize}
\item
As $\taufM(*0_\hc)$ is $\cD_{X\times\PP^1_\tau\times\PP^1_\hc}$-holonomic, it admits a Kashiwara-Malgrange filtration $\tauV_\bbullet(\taufM(*0_\hc))$ along $\tau$. This filtration is indexed by a set $A'+\ZZ$, where $A'$ is a priori only a discrete subset of $\CC$ that we can choose in $\{a\in\CC\mid\reel(a)\in[0,1)\}$ with the total order induced by the lexicographic order on $\CC=\RR+\sfi\RR$. For any compact set $K\subset X$, when restricting to $K\times\PP^1_\tau\times\PP^1_\hc$, only a finite subset \hbox{$A'_K\subset A'$} occurs as the index set. Furthermore, away from $0_\tau$, each $\tauV_a(\taufM(*0_\hc))$ \hbox{coincides}~with~$\taufM(*0_\hc)$ and, by~\hbox{restricting} to $\taucX^\circ$ (\ie away from $\infty_\tau\cup0_\hc\cup\infty_\hc$), we find the Kashiwara-Malgrange filtration as holonomic $\cD_{\taucX^\circ}$-module
\[
\tauV_a(\taufM(*0_\hc))|_{\taucX^\circ}=\tauV_a(\taufM(*0_\hc)|_{\taucX^\circ})=\tauV_a(\taucM(*0_\hc)|_{\taucX^\circ}).
\]

\item
On the other hand, by assumption of \sRspy of $\taucM$ along $\tau$, $\taucM$ is equipped with a coherent $\tauV$-filtration with respect to $\tauV_\bbullet(\cR_{\taucX})$.
\end{itemize}

As a consequence, $\taucM|_{\taucX^\circ}=\taufM(*0_\hc)|_{\taucX^\circ}$ comes the equipped with two a~priori distinct $\tauV$-filtrations (\ie with respect to distinct rings of operators). Indeed, we~regard $\cR_{\taucX^\circ}$ as a subsheaf of rings of $\wt\cR_{\taucX^\circ}=\cR_{\taucX^\circ}\langle\partial_\hc\rangle=\cD_{\taucX^\circ}$. Although $\taucM|_{\taucX^\circ}$ is $\cR_{\taucX^\circ}$-coherent, we cannot assert a priori that its $\tauV_\bbullet(\cD_{\taucX^\circ})$-filtration (which exists by the holonomicity assumption) and its $\tauV_\bbullet(\cR_{\taucX^\circ})$-filtration (which exists by assumption) coincide. This would be the case if we would know that each term of its $\tauV_\bbullet(\cD_{\taucX^\circ})$-filtration is $\tauV_0(\cR_{\taucX^\circ})$-coherent, by uniqueness of the $\tauV_\bbullet(\cR_{\taucX^\circ})$-filtration.

\begin{lemme}
Each term of the $\tauV_\bbullet(\cD_{\taucX^\circ})$-filtration of $\taucM(*0_\hc)|_{\taucX^\circ}$ is $\tauV_0(\cR_{\taucX^\circ})$-coherent.
\end{lemme}

\begin{proof}[Sketch of proof]
The $\tauV_\bbullet(\cD_{\taucX^\circ})$-filtration of $\taucM(*0_\hc)|_{\taucX^\circ}$ is the restriction of the $\tauV_\bbullet(\cD_{\taucX^\circ})$-filtration of $\taufM(*0_\hc)$. Since each step of this filtration is good, we can apply the equivalence by $q_*$ (\cf \cite[Th.\,A.1]{D-S02a}) to show that the $\tauV_\bbullet(\cD_{\taucX^\circ})$-filtration of $\taucM(*0_\hc)|_{\taucX^\circ}$ comes by analytification from a $\tauV$-filtration of $\tauM(*\hc)$ as a $\cD_X[\tau,\taum,\hc,\hcm]\langle\tau\partial_\tau,\hc\partial_\hc\rangle$-module. It is the unique increasing filtration $\tauV_\bbullet(\tauM(*\hc))$ of $\tauM(*\hc)$ indexed by $A'+\ZZ$ for some discrete set $A'\subset\{a\in\CC\mid\reel(a)\in[0,1)\}$ satisfying the following properties:
\begin{itemize}
\item
for each $a\in A'+\ZZ$, $\tauV_a(\tauM(*\hc))$ is $\cD_X[\tau,\hc,\hcm]\langle\tau\partial_\tau,\hc\partial_\hc\rangle$-coherent and $\tauM(*\hc)=\bigcup_a\tauV_a(\tauM(*\hc))$,
\item
$\tauV_{a+k}(\tauM(*\hc))=\tau^{-k}\tauV_a(\tauM(*\hc))$ for each $a\in A'+\ZZ$ and $k\in\ZZ$,
\item
$\tau\partial_\tau+a$ is nilpotent on $\gr_a^{\tauV}(\tauM(*\hc))$ for any $a\in A'+\ZZ$,
\item
for any compact set $K\subset X$, there exists a finite subset $A'_K\subset A'$ such that $\gr_a^{\tauV}(\tauM(*\hc))|_K=0$ for $a \notin A'_K+\ZZ$.
\end{itemize}
We can now argue as in Proposition \ref{prop:rescalV}\eqref{prop:rescalV1}. By uniqueness of the $\tauV$-filtration, $\tauV_a(\tauM(*\hc))$ is stable by the $\CC^*$-action, and is thus graded with respect to the grading \eqref{eq:decomptauMhcm}. It follows that the action of $\partial_\hc$ can be expressed in terms of that of $\hcm\tau\partial_\tau$, and thus each $\tauV_a(\tauM(*\hc))$ is $\cD_X[\tau,\hc,\hcm]\langle\tau\partial_\tau\rangle$-coherent. By analytification, we obtain the desired $\tauV_0(\cR_{\taucX^\circ})$-coherency.
\end{proof}

Now that we know that the filtrations $\tauV_\bbullet(\taucM(*0_\hc)|_{\taucX^\circ})$ and $\tauV_\bbullet(\taucM)|_{\taucX^\circ}$ coincide, we can glue $\tauV_\bbullet(\taucM)$ with $\tauV_\bbullet(\taufM(*0_\hc))$ and for each $a$ we obtain a coherent sheaf over the ring $\tauV_0\cR_{\ov\taufX}$, which is good (being contained in a good $\cD_{\ov\taufX}(*(\infty_\tau\cup\infty_\hc))$-module, \cf Proposition \ref{prop:subcoh}). By taking pushforward by $q_*$ we obtain the filtration $\tauV_\bbullet(\tauM)$ which satisfies the properties of Proposition \ref{prop:graded}.\qed

\subsection{New definition of \texorpdfstring{$\IrrMHM$}{IrrMHM}}\label{subsec:improved}
We can now define $\IrrMHM(X)$ as a full subcategory of $\iMTM^\intt(X)$. Let $((\cM',\cM'',\iC),W_\bbullet)$ be an object of $\iMTM^\intt(X)$: in~par\-tic\-ular, it is a $W$-filtered triple consisting of $\wt\cR_\cX$-modules $\cM',\cM''$ which are holonomic and strict as $\cR_\cX$-modules, and of an integrable sesquilinear pairing $\iC$ between them. We say that $((\cM',\cM'',\iC),W_\bbullet)$ is an object of $\IrrMHM(X)$ if the following holds:
\begin{itemize}
\item
Let $((M',M'',\iC),W_\bbullet)$ be the partial algebraization of the Malgrange extensions of $((\cM',\cM'',\iC),W_\bbullet)$ (Corollary \ref{cor:Malgrangeext}), so that the rescalings $\taucM',\taucM''$ are well-defined $\wt\cR_{\taucX}(*0_\tau)$-modules by Theorem \ref{th:wellresc} and $\tauiC$ is defined as a sesquilinear pairing between their restriction to $X\times\CC^*_\tau\times\CC^*_\hc$. \emph{Then $\tauiC$ extends as a sesquilinear pairing between $\taucM'$ and $\taucM''$ with values in moderate distributions (\cf\cite[Def.\,2.43]{Bibi15}), and the object $((\taucM',\taucM'',\tauiC),W_\bbullet)$ belongs to $\MTM(\tauX,(*0_\tau))$.}
\end{itemize}

It follows from Theorem \ref{th:wellresc} and Proposition \ref{prop:graded} that the rescaling of an object of $\IrrMHM(X)$ is well-rescalable and graded in the sense of \cite[\S2.3.d \& Def.\,2.26]{Bibi15}. Any morphism in $\iMTM^\intt(X)$ between objects of $\IrrMHM(X)$ (that is, a morphism in $\IrrMHM(X)$) can be rescaled by using the same procedure as for $\cM',\cM''$. The rescaled morphism is then a morphism in $\MTM(\taucX,(*0_\tau))$, hence it is \sRspe along $\tau$, and graded (\cf\cite[Lem.\,2.27]{Bibi15}). It follows from \loccit\ that its kernel and cokernel also belong to $\IrrMHM(X)$. In other words, $\IrrMHM(X)$ is a full abelian subcategory of $\iMTM^\intt(X)$, and any morphism is strict with respect to the weight filtration. In particular, each graded object $\gr_\ell^W$ is a pure object of $\IrrMHM(X)$.

Furthermore, any morphism in $\iMTM^\intt(X)$ between objects of $\IrrMHM(X)$ induces a bi-strict morphism between the underlying $\ccD_X$-modules equipped with their irregular Hodge and weight filtration (\cf \cite[Prop.\,2.31 \& 2.32]{Bibi15}). As a consequence, for $\ccM\in\Mod(\cD_X)$ underlying $\cT\in\IrrMHM(X)$, the irregular Hodge filtration of $\gr_\ell^W\ccM$ is the filtration induced by $F_\bbullet^\irr\ccM$.

Since the construction of the Malgrange extension of \cite[Th.\,1.3]{Mochizuki21} is compatible with the standard functors, it follows that the properties proved for $\IrrMHM(X)$ in \cite[\S2.4]{Bibi15}, namely compatibility with projective pushforward and smooth pullback, hold with this improved definition of $\IrrMHM(X)$.

\subsection{The category $\protect\Cresc(X)$ and its properties}\label{subsec:compmochi}
We recall some results of \cite{Mochizuki21} on the category $\Cresc(X)$ and explain its relation with the category $\IrrMHM(X)$. We~\hbox{refer} to the monograph \cite{Mochizuki11} for the definition and the properties of (integrable) mixed twistor D-modules. We only consider the analytic setting in what follows.

Recall that, given an integrable mixed twistor D-module $((\cM',\cM'',C),W_\bbullet)$, the $\wt\cR_\cX$-module $\cM''$ is said to underlie this twistor D-module. In \cite{Mochizuki21}, T.\,Mochizuki introduces the category $\Cresc(X)$ as a subcategory of that of $\wt\cR_\cX$-modules underlying an integrable mixed twistor D-module. The objects of $\Cresc(X)$ are precisely those which satisfy the assumptions of Theorem \ref{th:wellresc}. The category $\Cresc(X)$ has the drawback of being non abelian. However, it is preserved by various functors on $\wt\cR_\cX$-modules (\cf \cite[Th.\,1.7]{Mochizuki21}. For example, if $H\subset X$ is a smooth hypersurface which is non-characteristic with respect to an object $\cM$ of $\Cresc(X)$, then the twistor-localization morphism $\cM\to\cM[*H]$ and the dual twistor-localization morphism $\cM\to\cM[!H]$ are morphisms in $\Cresc(X)$ whose kernel an cokernel belong to $\Cresc(X)$.

For an object $\cM$ of $\Cresc(X)$, the underlying $\ccD_X$-module $\ccM$ is equipped with a coherent filtration $F^\irr_\bbullet\ccM$ indexed by $A+\ZZ$ for some finite subset $A\subset[0,1)$, that we can regard as a nested family, indexed by $\alpha\in A$, of $\ZZ$-indexed coherent filtrations $F^\irr_{\alpha+\bbullet}\ccM$. We consider the various Rees modules $R_{F^\irr_\alpha}\ccM$ (these are holonomic graded $R_F\ccD_X$-modules). The good behavior of the objects of $\Cresc(X)$ with respect to some functors transfers to these Rees modules.

\subsubsection*{Duality and strict holonomicity}
In general, let $(\ccM,F_\bbullet\ccM)$ be a holonomic $\cD_X$-module equipped with a coherent $F$-filtration and let $R_F\ccM$ denote its Rees module, which is a graded module over the Rees graded ring $R_F\ccD_X$. We say that $R_F\ccM$ is \emph{strictly holonomic} if
\begin{enumeratea}
\item
$\cExt^i_{R_F\ccD_X}(R_F\ccM,R_F\ccD_X)=0$ for $i\neq n=\dim X$,
\item
and $\cExt^n_{R_F\ccD_X}(R_F\ccM,R_F\ccD_X)$ is strict.
\end{enumeratea}
The dual module $\bD(R_F\ccM)$, which is the graded $R_F\ccD_X$-module obtained by side-changing from $\cExt^n_{R_F\ccD_X}(R_F\ccM,R_F\ccD_X)$, takes then the form $R_F(\bD\ccM)$ for some filtration $F_\bbullet\bD(\ccM)$ on the dual holonomic $\ccD_X$-module $\bD(\ccM)$. Furthermore, biduality $\bD\bD(R_F\ccM)\simeq R_F\ccM$ holds and $\bD(R_F\ccM)$ is strictly holonomic.

\begin{proposition}[{\cite[Cor.\,6.55]{Mochizuki21}}]\label{prop:dual}
Let $\cM$ be an object of the category $\Cresc(X)$ and let $(\ccM,(F^\irr_{\alpha+\ZZ}\ccM)_{\alpha\in A})$ be the associated nested-filtered $\ccD_X$-module. Then
\begin{itemize}
\item
the complex $\bD\cM$ is concentrated in degree zero and its cohomology belongs to $\Cresc(X)$;
\item
the associated irregular Hodge filtration is indexed by $-A+\ZZ$;
\item
each $R_F\ccD_X$-module $R_{F^\irr_\alpha}\ccM$ is strictly holonomic, and the coherent dual filtration on the dual $\ccD_X$-module $\bD\ccM$ is the irregular filtration $F^\irr_{\beta+\ZZ}(\bD\ccM)$ induced by the object $\bD\cM$ of $\Cresc(X)$, where $\beta=({{<}-}\alpha-1)$ is the predecessor of $-\alpha-1$ in $-A+\ZZ$.
\end{itemize}
\end{proposition}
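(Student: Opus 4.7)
My plan is to reduce the three statements to Mochizuki's duality theory for (integrable) mixed twistor $\cD$-modules on the rescaled space $\taucX$, applied to the analytification $\taucM$, and then to descend to $\cM$ via the restriction $i^*_{\tau=\hc}$ used to define the irregular Hodge filtration in Proposition \ref{prop:rescalV}.

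By definition of $\Cresc(X)$, the object $\taucM$ underlies an element of $\MTM(\taucX,(*0_\tau))$. Duality is known to preserve (integrable) mixed twistor $\cD$-modules \cite{Mochizuki11}, so $\bD\taucM$ is concentrated in degree zero and lives in the same category; combined with Theorems \ref{th:Malgrangeext} and \ref{th:wellresc}, this yields the first bullet. For the second bullet, I would invoke the standard behaviour of the $V$-filtration under duality of modules that are \sRspe along $\tau$: if $\taucM$ carries a $\tauV$-filtration indexed by $A+\ZZ$, then $\bD\taucM$ carries one whose index set represents $-A+\ZZ$ modulo $\ZZ$, the offset by $-1$ coming from the order-one operator $\partiall_\tau$. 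Restricting via $i^*_{\tau=\hc}$ and using Proposition \ref{prop:rescalV} then gives the index set $-A+\ZZ$ for the irregular Hodge filtration of $\bD\ccM$.

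For the third bullet, Proposition \ref{prop:rescalV} yields the identification $R_{F^\irr_\alpha}\ccM = i^*_{\tau=\hc}\tauV_\alpha(\tauM)$, which translates strict holonomicity into the vanishing of $\cExt^i_{\tauR'_X\langle\tau\partial_\tau\rangle}(\tauV_\alpha(\tauM),\tauR'_X\langle\tau\partial_\tau\rangle)$ for $i\neq n$ together with $\hc$-torsion freeness of the top Ext. The index shift $\beta = {<}{-\alpha-1}$ in the dual filtration can then be read off from the corresponding $\tauV$-filtration duality and the predecessor convention. The hard part is precisely this third bullet: strict holonomicity is \emph{not} a general property of coherent filtrations on holonomic $\ccD_X$-modules, and its validity for the irregular Hodge filtration rests on a delicate analysis of how the $\tauV$-filtration interacts with duality on rescaled $\tauR_X(*\tau)$-modules. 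This technical input is exactly what \cite[\S6]{Mochizuki21} supplies, which is why the proposition is quoted as \cite[Cor.\,6.55]{Mochizuki21} rather than proved by a short homological argument.
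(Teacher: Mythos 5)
The paper does not prove Proposition~\ref{prop:dual}: it is quoted directly from Mochizuki, as the bracketed reference \cite[Cor.\,6.55]{Mochizuki21} in the proposition heading indicates, and no \texttt{proof} environment follows it. So there is no in-paper argument against which to compare your sketch. You correctly recognize this yourself at the end, and your high-level description of how the result is expected to arise from the rescaling framework — dualize $\taucM$ inside $\MTM(\taucX,(*0_\tau))$, track the $\tauV$-filtration under duality, and restrict via $i^*_{\tau=\hc}$ to descend to $R_{F^\irr_\alpha}\ccM$ — is a reasonable account of the surrounding mechanics, consistent with Proposition~\ref{prop:rescalV} and Theorems~\ref{th:Malgrangeext}, \ref{th:wellresc}. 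Your most important observation is also correct: strict holonomicity of $R_{F^\irr_\alpha}\ccM$ is \emph{not} automatic for an arbitrary coherent filtration on a holonomic $\ccD_X$-module, and that is precisely the nontrivial content supplied by \cite[\S6]{Mochizuki21}. One caveat: the index shift from $\alpha$ to $\beta=({{<}-}\alpha-1)$ combines two effects — the usual $V$-filtration shift under duality \emph{and} the predecessor/openness coming from the convention $F^\irr_{<a}$ versus $F^\irr_{\leq a}$ — and your phrase ``the offset by $-1$ coming from the order-one operator $\partiall_\tau$'' compresses this more than I would like; the predecessor ${<}\,{-\alpha-1}$ rather than $-\alpha-1$ itself reflects the second effect and is easy to miss. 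But as a blind sketch of material the paper itself delegates to Mochizuki, what you wrote is sensible.
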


\subsubsection*{Non-characteristic inverse images}
If an $\cR_\cX$-module $\cM$ underlies a mixed twistor D\nobreakdash-module on $X$, its characteristic variety $\mathrm{Char}\cM\subset(T^*X)\times\CC_\hb$ is contained in the product $\Lambda\times\CC_\hb$, where $\Lambda$ is a Lagrangian closed analytic subset of $T^*X$, that one can take minimal for this property. For a morphism $f:X'\to X$, we say that~$f$ is non-characteristic with respect to $\cM$ if it is so with respect to $\Lambda$. Writing $f$ as the composition of its graph inclusion $\iota_f:X'\hto X'\times X$ and the projection $p:X'\times X\to X$, the non-charactericity condition is equivalent to that of $\iota_f$ with respect to $\Lambda\times T^*_XX$.

We say that $f$ is strictly non-characteristic with respect to $\cM$ if moreover the cohomology of its $\cR_\cX$-module pullback complex $\Dm f^*\cM$ is strict (\cf Notation \ref{subsec:nota}\eqref{nota:10}). This implies that $\Dm f^*\cM\simeq\Dm f^{*(0)}\cM$ is concentrated in degree zero.

As by definition $\cM$ is strict, \ie has no $\cO_{\CC_\hb}$-torsion, the same property holds for its $\cR_\cX$-module pullback $\Dm p^*\cM$. On the other hand, a non-characteristic closed inclusion is strictly non-characteristic with respect to $\cM$: as this is a local property, one iteratively reduce to the case of a codimension-one non-characteristic inclusion, in which case the strictness property follows from the strict specializability of $\cM$. In~conclusion, for $\cM$ underlying an object of $\MTM(X)$, the non-charactericity of $f$ is equivalent to its strict non-charactericity.

In the appendix, for the sake of completeness, we review the notion of non-characteristic pullback of a (possibly integrable) mixed twistor D-module as it is not explicitly defined in all cases in \cite{Mochizuki11}.

\begin{proposition}[{\cite[Prop.\,6.48 \& 6.67]{Mochizuki21}}]\label{prop:pull}
Let $\cM$ be an object of $\Cresc(X)$ with underlying nested-filtered $\cD_X$-module $(\ccM,(F^\irr_{\alpha+\ZZ}\ccM)_{\alpha\in A})$, and let $f:X'\to X$ be a holomorphic map which is non-characteristic with respect to $\cM$ (hence to $\ccM$). Then $\Dm f^*\cM$ is an object of $\Cresc(X')$ with underlying nested-filtered $\cD_{X'}$-module $(\Dm f^*\ccM,(F^\irr_{\alpha+\ZZ}(\Dm f^*\ccM))_{\alpha\in A})$, and we have for $\alpha\in A$:
\[
R_{F^\irr_\alpha}(\Dm f^*\ccM)\simeq \Dm f^*(R_{F^\irr_\alpha}\ccM).
\]
\end{proposition}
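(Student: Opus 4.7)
The plan is to reduce the statement to Mochizuki's stability of $\Cresc$ under non-characteristic pullback by carefully tracking the rescaling, the $\tauV$-filtration along $\tau$ and the Rees construction. I proceed in three steps.

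First, I factor $f=p\circ\iota_f$ through the graph inclusion $\iota_f:X'\hto X'\times X$ and the second projection $p:X'\times X\to X$. Since $p$ is smooth, $\Dm p^*=\Dm p^{*(0)}$ is flat and commutes with all the constructions at hand (rescaling, $\tauV$-filtration, Rees modules, analytification), so it is harmless. The non-charactericity assumption on $f$ transfers to the pair $(\iota_f,\Dm p^*\cM)$; by the discussion preceding the statement, non-charactericity is equivalent to strict non-charactericity for an object underlying an integrable mixed twistor D-module. Hence $\Dm\iota_f^*(\Dm p^*\cM)\simeq\Dm\iota_f^{*(0)}(\Dm p^*\cM)$ is concentrated in degree zero and strict, and the same conclusion holds for $\Dm f^*\cM$.

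Second, I verify that rescaling commutes with strict non-characteristic pullback. The rescaling $M\mapsto\tauM=\ccO_X[\tau,\taum,\hc]\otimes_{\ccO_X[\hb]}M$ of Definition \ref{def:rescalingalgebraic} only modifies the twistor direction, so any holomorphic $g:X'\to X$ produces a canonical isomorphism $\tau(\Dm g^*M)\simeq\Dm g^*\tauM$, where on the right $\Dm g^*$ is computed with respect to $\tauR'_X$ (or $\wt\tauR_X$) on $X\times\CC_\tau$ via $g\times\mathrm{id}_{\CC_\tau}$; the analogous identity between the analytifications then holds on the corresponding analytic spaces. Non-charactericity, being a condition on the $T^*X$-directions alone, is preserved by rescaling and, by step one, remains equivalent to strict non-charactericity.

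Third, I transport the $\tauV$-filtration along $\tau$. The candidate filtration obtained by pulling back $\tauV_a\taucM$ along $f\times\mathrm{id}$ satisfies all the axioms of Definition \ref{def:sRspyalg}: $\tauV_0\cR$-coherence survives strict non-characteristic pullback (via Proposition \ref{prop:subcoh}); the identities $\tauV_{a+k}=\tau^{-k}\tauV_a$ and the nilpotence of $\tau\partiall_\tau+a\hc$ on $\gr^{\tauV}_a$ are local and pass to the pullback; and strictness of the pullback keeps $\hc$ injective on $\gr_a^{\tauV}$. By uniqueness of the $\tauV$-filtration this coincides with the intrinsic $\tauV_a$ of the pullback. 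Applying $i^*_{\tau=\hc}$ to the resulting equality of coherent $\tauV_0\tauR_X$-modules and invoking Propositions \ref{prop:basic} and \ref{prop:rescalV} yields the asserted isomorphism $R_{F^\irr_\alpha}(\Dm f^*\ccM)\simeq\Dm f^*R_{F^\irr_\alpha}\ccM$. Combined with Mochizuki's stability of the conditions defining $\Cresc$ (well-rescalability and membership of the rescaled object in $\MTM({}^\tau X',(*0_\tau))$, as in Section \ref{subsec:improved}), this gives $\Dm f^*\cM\in\Cresc(X')$ with the announced nested-filtered $\cD_{X'}$-module.

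The hard part will be step three: $\taucM$ does not itself belong to an MTM category on $\taucX$ (it is only the naive localization along $0_\tau$ of one), so the compatibility of its $\tauV$-filtration with pullback cannot be obtained from a standard MTM functoriality statement. The $\tauV$-axioms for the pulled-back filtration must be verified directly, in the spirit of \cite[Prop.\,6.48]{Mochizuki21}, using the strict non-charactericity established in step one to control both the coherence and the behavior of the graded pieces.
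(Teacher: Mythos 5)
Your overall structure (graph inclusion $\circ$ projection, handle the projection as flat and easy, reduce the closed immersion to codimension one) matches the paper's. The substantive difference is what you do at the codimension-one step. The paper simply cites two results of Mochizuki as black boxes: \cite[Prop.\,6.67]{Mochizuki21} for the set-theoretic identity $F^\irr_{\alpha+\ZZ}(\Dm f^*\ccM)=f^*(F^\irr_{\alpha+\ZZ}\ccM)$, and \cite[Prop.\,6.48]{Mochizuki21} for the fact that $R_{F^\irr_\alpha}\ccM$ is strictly specializable along a non-characteristic smooth hypersurface; combined, these give that $\Dm f^*(R_{F^\irr_\alpha}\ccM)$ is concentrated in degree zero and strict, which is exactly the Rees-module isomorphism. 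You, instead, propose to re-derive the Rees-module statement from scratch by pulling back $\taucM$, transporting its $\tauV$-filtration, verifying the axioms of Definition \ref{def:sRspyalg} on the pullback, and then running Propositions \ref{prop:basic}--\ref{prop:rescalV}.

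That is a genuinely different route, and you are right to flag step three as the hard part, but the gap there is larger than your plan acknowledges: what you need to verify is essentially the content of \cite[Prop.\,6.48]{Mochizuki21} itself. In particular, the claim that ``$\tauV_0\cR$-coherence survives strict non-characteristic pullback (via Proposition \ref{prop:subcoh})'' does not work as stated. Proposition \ref{prop:subcoh} is about coherent $\cA$-submodules of a module that is a union of coherent $\cO$-submodules; it is not a tool for showing that the pullback of a coherent $\tauV_0\cR$-module along a closed immersion is again $\tauV_0\cR$-coherent, nor for showing that the candidate $\tauV$-filtration of $\Dm\iota^*\taucM$ so obtained is exhaustive with the required support and nilpotence properties. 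Strict specializability of a candidate filtration along a non-characteristic hypersurface is precisely what is proved in Mochizuki's paper (after considerable work with the rescaled module and the $\CC^*$-action), and the present paper's strategy is to use it, not to re-prove it. So your plan is not wrong in its goal, but it needs to either fill in the content of Prop.\,6.48 or, more economically, cite it exactly as the paper does. You also never explicitly invoke the analogue of \cite[Prop.\,6.67]{Mochizuki21} to pin down that the intrinsic irregular Hodge filtration of $\Dm f^*\ccM$ coincides with the one transported along $f$; this is needed to match both sides of the displayed isomorphism and is not automatic from the $\Cresc$-membership you invoke at the end.
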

On the right-hand side, $\Dm f^*$ is taken in the sense of $R_F\cD_X$-modules. The isomorphism implies that the right-hand side is concentrated in degree zero and that it is strict, because the left-hand side is so. In other words, if $f$ is non-characteristic with respect to $\cM\in\Cresc(X)$, then it is strictly non-characteristic with respect to each $R_{F^\irr_\alpha}\ccM$ ($\alpha\in A$).

\begin{proof}
The result of \cite[Prop.\,6.67]{Mochizuki21} asserts that
\[
F^\irr_{\alpha+\ZZ}(\Dm f^*\ccM)=f^*(F^\irr_{\alpha+\ZZ}\ccM)
\]
for each $\alpha\in A$. It remains to be proved that $\Dm f^*(R_{F^\irr_\alpha}\ccM)$ is concentrated in degree zero and is strict. We write $f$ as the composition of a non-characteristic closed inclusion and a projection. The case of a projection is easy (\cf\eg\cite[Rem.\,8.6.7]{S-Sch}), and since the question is local, the case of a non-characteristic closed inclusion redu\-ces by induction on the codimension to the case of codimension one. Then the result is furnished by \cite[Prop.\,6.48]{Mochizuki21}, which asserts that each $R_{F^\irr_\alpha}\ccM$ is strictly specializable along a non-characteristic smooth hypersurface.
\end{proof}

\Subsubsection*{Projective pushforward}

\begin{proposition}[{\cite[Prop.\,6.24 \& 6.46]{Mochizuki21} and \cite[Th.\,2.62]{Bibi15}}]\label{prop:push}
Let $f:X\to X'$ be a projective morphism of complex manifolds and let $\cM$ be an object of $\Cresc(X)$. Then the pushforward modules $\Dm f_*^{(k)}\cM$ are objects of $\Cresc(X')$ and we have for each $a\in\ZZ$:
\[
\Dm f_*^{(k)}(R_{F^\irr_\alpha}\ccM)\simeq R_{F^\irr_\alpha}(\Dm f_*^{(k)}(\ccM)),
\]
in particular, the spectral sequence attached to the pushforward of the filtered $\cD_X$\nobreakdash-mod\-ule $(\ccM,F^\irr_{\alpha+\ZZ}\ccM)$ degenerates at $E_1$.
\end{proposition}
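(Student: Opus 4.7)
The plan is to combine the stability of $\Cresc(X)$ under projective pushforward with a strict compatibility between pushforward and the rescaling procedure used to construct the irregular Hodge filtration.

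Concretely, I would first invoke \cite[Prop.\,6.24]{Mochizuki21} to establish that each cohomology $\Dm f_*^{(k)}\cM$ lies in $\Cresc(X')$, so that $\Dm f_*^{(k)}\ccM$ is equipped with its own irregular Hodge filtration produced by the mechanism of Section \ref{subsec:rescalingalgebraic}. Then, working with the partial algebraization $M$ afforded by Corollary \ref{cor:Malgrangeext}, I would verify that rescaling commutes with pushforward, namely that there is a natural isomorphism
\[
{}^\tau\bigl(\Dm f_*^{(k)}M\bigr)\simeq \Dm f_*^{(k)}\bigl(\tauM\bigr),
\]
where on the right $f$ is extended to the rescaled spaces by the identity on the $(\tau,\hc)$-factors. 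This is a direct consequence of the explicit description in Definition \ref{def:rescalingalgebraic} together with the flatness of $\ccO_X[\tau,\taum,\hc]$ over $\ccO_X[\hb]$ recorded in Notation \ref{subsec:nota}\eqref{nota:4}, plus the projection formula applied degree by degree in the $\tau$-grading.

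Next, I would invoke \cite[Prop.\,6.46]{Mochizuki21} to transfer the strict $\RR$\nobreakdash-spec\-ial\-izability along $\tau$ of $\tauM$ to its pushforward: the $\tauV$-filtration commutes with $\Dm f_*^{(k)}$, and each graded piece $\gr^{\tauV}_a\bigl(\Dm f_*^{(k)}\tauM\bigr)$ has no $\CC[\hc]$-torsion. Restricting via $i^*_{\tau=\hc}$ then yields
\[
\Dm f_*^{(k)}\bigl(R_{F^\irr_\alpha}\ccM\bigr)\simeq R_{F^\irr_\alpha}\bigl(\Dm f_*^{(k)}\ccM\bigr),
\]
since the strictness of the graded pieces permits $i^*_{\tau=\hc}$ to commute with the pushforward, and on the left side $i^*_{\tau=\hc}\tauV_\alpha\tauM = R_{F^\irr_\alpha}\ccM$ by the very construction of the irregular Hodge filtration from Proposition \ref{prop:rescalV}.

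The hard part will be precisely this last strictness assertion: the $\CC[\hc]$-strictness of $\gr^{\tauV}_a\bigl(\Dm f_*^{(k)}\tauM\bigr)$ is what underlies the $E_1$-degeneration of the pushforward spectral sequence for the graded Rees module $R_{F^\irr_\alpha}\ccM$, and it is ultimately a consequence of the strict degeneration theorem for pushforwards of mixed twistor D\nobreakdash-modules, \cite[Th.\,2.62]{Bibi15}, applied after rescaling to the module underlying the mixed twistor D-module on $\tauX$ furnished by Theorem \ref{th:wellresc}. Once this strictness is in place, the Rees-compatibility isomorphism and the $E_1$-degeneration at the level of $(\ccM,F^\irr_{\alpha+\bbullet}\ccM)$ are logically equivalent, and the proposition follows.
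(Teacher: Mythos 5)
The paper does not prove this proposition; it is stated purely as a citation (to \cite[Prop.\,6.24 \& 6.46]{Mochizuki21} and \cite[Th.\,2.62]{Bibi15}) with no accompanying proof, so there is nothing in the text to compare your argument against. With that caveat, your sketch is a reasonable reconstruction of what the cited results establish, and the order of operations you propose (stability of $\Cresc$ under pushforward; commutation of rescaling with pushforward by flat base change; preservation of \sRspy along $\tau$ together with strictness of the $\tauV$-graded pieces of the pushforward; then restricting along $\tau=\hc$) is consistent with the machinery of Section \ref{subsec:rescalingalgebraic} and the category $\Cresc(X)$.

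Two places where your sketch glides over points that would need care if you actually wrote this out. First, the commutation ${}^\tau(\Dm f_*^{(k)}M)\simeq\Dm f_*^{(k)}(\tauM)$ is phrased at the partially $\hb$-algebraic level, but the pushforward in the mixed twistor setting is sheaf-theoretic and analytic; one needs the compatibility of the Malgrange extension functor with projective pushforward (which the paper invokes explicitly at the end of Section \ref{subsec:improved} and which is part of Mochizuki's Theorem~1.3) before the flat-base-change argument can be applied to an algebraic representative. Second, the passage from strictness of $\gr^{\tauV}_a\bigl(\Dm f_*^{(k)}\tauM\bigr)$ to the commutation of $i^*_{\tau=\hc}$ with $\Dm f_*^{(k)}$ also requires that each $\tauV_a$ itself commutes with the pushforward and that the steps $\tauV_a$ are $\tau$-graded so that they descend to Rees modules under $i^*_{\tau=\hc}$ as in Propositions \ref{prop:basic} and \ref{prop:rescalV}; you allude to this but it is precisely the content of \cite[Prop.\,6.46]{Mochizuki21} rather than something that follows directly from \cite[Th.\,2.62]{Bibi15} alone. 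In short, your proposal is not wrong, but it is essentially a sketch of Mochizuki's own proof rather than an alternative route, and the paper simply defers to that reference.
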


\subsubsection*{Application to $\IrrMHM(X)$}
It is straightforward to check that the $\wt\cR_\cX$-module underlying an object of $\IrrMHM(X)$ as defined in Section \ref{subsec:improved} is an object of $\Cresc(X)$. In~particular, the notion of irregular Hodge filtration of \cite{Bibi15} coincides with that of \cite[Cor.\,1.6 \& \S6.5]{Mochizuki21}. It follows from the results of \cite[Th.\,1.7]{Mochizuki21} that the irreg\-ular Hodge filtration induced by an object of $\IrrMHM(X)$ satisfies the compatibility properties of \loccit

\section{Vanishing theorems}

\subsection{A criterion for the Kodaira-Saito vanishing property}
We recall the result proved in \cite[\S11.9]{S-Sch}, following the proof of \cite{MSaito87}. For the ample line bundle $L$ on $X$ we choose an integer $m\geq2$ such that $L^{\otimes m}$ defines an embedding $X\hto\PP^N$ and we let $\iota_H:H\hto X$ denote a hyperplane section. We say that $(\ccM,F_\bbullet\ccM)$ satisfies \emph{the Kodaira-Saito vanishing property} (with respect to $L$) if
\begin{align*}
H^i(X,\gr^F\pDR(\ccM)\otimes L)&=0\quad \text{for }i>0,\\
H^i(X,\gr^F\pDR(\ccM)\otimes L^{-1})&=0\quad \text{for }i<0.
\end{align*}

Recall that we denote by $\Dm f_*^{(k)}$, \resp $\Dm f^{*(k)}$, the $k$-th $\ccD$- or $R_F\ccD$-module pushforward, \resp pullback, by the map $f$, and by $a_X$ the constant map $X\to\mathrm{pt}$.

Classical constructions of coverings (\cf \cite[\S4.b]{Lazarsfeld04} and \cite[\S2]{E-V86}) produce a finite morphism $f:X'\to X$ satisfying the following properties:
\begin{enumeratea}
\item\label{subsec:cycliccovering1}
the source $X'$ is smooth, as well as $H':=f^{-1}(H)$,
\item\label{subsec:cycliccovering2}
the restriction $f:H'\to H$ is an isomorphism,
\item\label{subsec:cycliccovering3}
setting $U=X\moins H$ and $U'=f^{-1}(U)=X'\moins H'$, the restriction $f:U'\to U$ is a degree $m$ covering, and $f$ is cyclically ramified along $H$,
\item\label{subsec:cycliccovering4}
the bundle $f^*L$ is very ample, $H':=f^{-1}(H)\simeq H$ is a corresponding hyperplane section of $X'$, and $U'$ is affine,
\item\label{subsec:cycliccovering5}
there exists a canonical isomorphism $f_*\ccO_{X'}\simeq\bigoplus_{i=0}^{m-1}L^{-i}$ (with $L^0:=\ccO_X$).
\end{enumeratea}

The next criterion is proved in \cite[\S11.9]{S-Sch} by following the proof of M.\,Saito in \cite{MSaito87} for mixed Hodge modules.

\begin{theoreme}\label{th:KSproperty}
Let $(\ccM,F_\bbullet\ccM)$ be a coherently filtered $\ccD_X$-module which is strictly holonomic. Assume that there exists a hyperplane section $H$ (relative to $L^{\otimes m}$) which is strictly non-characteristic for $R_F\ccN=R_F\ccM$ or $R_F\bD(\ccM)$ such that
\begin{enumerate}
\item\label{th:KSproperty1prime}
with respect to the associated cyclic covering $f:X'\to X$, the pushforward $\CC[\hb]$-modules $\Dm a_{X'*}^{(k)}(\Dm f^{*(0)}(R_F\ccN))$ are torsion-free;
\item\label{th:KSproperty2}
the restriction $\Dmiota_{H*}(R_F\ccN_H):=\Dmiota_{H*}(\Dmiota^*_HR_F\ccN)$ satisfies the Kodaira-Saito vanishing property.
\end{enumerate}

Then $R_F\ccM$ and $R_F\bD(\ccM)$ satisfy the Kodaira-Saito vanishing property.\qed
\end{theoreme}

\Subsection{Proof of Theorem \ref{th:vanishing} and Corollaries \ref{cor:vanishing3} and \ref{cor:vanishing4}}

\begin{proof}[Proof of Theorem \ref{th:vanishing}]
We consider the $\ZZ$-indexed filtration $F^\irr_{\alpha+\ZZ}\ccM$ for $\alpha\in A$ and we~prove the theorem by induction on the dimension of the support of~$\ccM$ (equivalently, that of $\cM$), the case when it has dimension zero being clear. As~remarked above, $\cM$ is an object of $\Cresc(X)$, so we can apply the results of Section~\ref{subsec:compmochi} to the filtered $\cD_X$-module $(\ccM,F^\irr_{\alpha+\ZZ}\ccM)$ ($\alpha\in A$). We first argue with $\cM$.

We first note that the cyclic covering $f:X'\to X$ is (strictly) non-characteristic with respect to $\cM$. By Proposition \ref{prop:pull}, $\Dm f^*(R_{F^\irr_\alpha}\ccM)$ is the Rees module of the irregular Hodge filtration indexed by $\alpha+\ZZ$ attached to the object $\Dm f^*\cM$ of $\Cresc(X')$. Property \ref{th:KSproperty}\eqref{th:KSproperty1prime} then follows from Proposition \ref{prop:push} applied to the constant map $a_{X'}$ and to $\Dm f^*\cM$. Property \eqref{th:KSproperty2} is proved by induction on the dimension of the support of~$\cM$. It is then a direct consequence of \cite[Prop.\,6.48]{Mochizuki21}, as reviewed in Proposition \ref{prop:pull}.

For $\bD\cM$, according to Proposition \ref{prop:dual}, the above argument for $\cM$ and $R_{F^\irr_\alpha}\ccM$ also applies to $\bD\cM$ and $R_{F^\irr_\beta}\bD\ccM$ for $\beta=({{<}-}\alpha-1)$. Therefore, the conclusion~of Theorem \ref{th:KSproperty} holds for each filtered $\cD_X$-module $(\ccM,F^\irr_{\alpha+\ZZ}\ccM)$ ($\alpha\in A$) and the dual filtered $\cD_X$-module $(\bD\ccM,F^\irr_{\beta+\ZZ}\bD\ccM)$ ($\beta\in B=(-A+\ZZ)\cap[0,1)$). This concludes the proof of Theorem \ref{th:vanishing}.
\end{proof}

\begin{proof}[Proof of Corollary \ref{cor:vanishing3}]
We recall that, for $a\in A+\ZZ$,
\[
F^\irr_{a-n}\pDR(\ccM)=\Bigl\{F^\irr_{a-n}(\ccM)\ra\Omega^1_X\otimes F^\irr_{a-n+1}(\ccM)\ra\cdots\ra\Omega^n_X\otimes F^\irr_a(\ccM)\Bigr\},
\]
where $\Omega^n_X\otimes F^\irr_a(\ccM)=\omega_X\otimes F^\irr_a(\ccM)$ is in degree zero. For $a\in[a_o,a_o+1)$, this complex reduces to $\omega_X\otimes F^\irr_a(\ccM)$ in degree zero, and the desired vanishing is nothing but the first line in Theorem \ref{th:vanishing}.
\end{proof}

\begin{proof}[Proof of Corollary \ref{cor:vanishing4}]
We start with a general lemma.

\begin{lemme}\label{lem:general}
Let $f:X\to Y$ be a proper morphism between two smooth complex manifolds of respective dimensions $n$ and $m$, and let $(\ccM,F_\bbullet\ccM)$ be a coherently filtered (left) $\ccD_X$-module with associated Rees module $R_F\ccM$. Assume that
\begin{enumerate}
\item\label{lem:general1}
$p$ is an index such that $F_{p-1}\ccM=0$;
\item\label{lem:general2}
each $\Dm f_*^{(j)}(R_F\ccM)$ is strict, \ie there exists a (unique) coherent filtration $F_\bbullet(\Dm f_*^{(j)}\ccM)$ such that $\Dm f_*^{(j)}(R_F\ccM)=R_F(\Dm f_*^{(j)}\ccM)$.
\end{enumerate}
Then, for each $j\in\ZZ$, we have
\[
\omega_Y\otimes F_{p+m-n}(\Dm f_*^{(j)}\ccM)
\simeq R^jf_*(\omega_X\otimes F_p\ccM)\quand F_{p+m-n-1}(\Dm f_*^{(j)}\ccM)=0.
\]
\end{lemme}

\begin{proof}
We use the following formula for the pushforward of a coherent $\cD_X$-module (\cf \eg \cite[Ex.\,8.51]{S-Sch}):
\[
\omega_Y\otimes\Dm f^{(j)}_*\ccM\simeq\ccH^j\Bigl(\bR f_*\pDR(\ccM\otimes_{f^{-1}\cO_Y}f^{-1}\ccD_Y)\Bigr)
\]
and its analogue for the associated Rees modules. According to \eqref{lem:general2}, for each $p\in\ZZ$, the term of $\hb$-degree $p-n$ in $\omega_Y\otimes \Dm f^{(j)}_*(R_F\ccM)$ is $\omega_Y\otimes F_{p+m-n}(\Dm f^{(j)}_*\ccM)$. On the other hand, the term in degree $p-n$ in $\ccH^j\bigl(\bR f_*\pDR R_F(\ccM\otimes_{f^{-1}\cO_Y}f^{-1}\ccD_Y)\bigr)$ is $\ccH^j\bigl(\bR f_*F_{p-n}\pDR(\ccM\otimes_{f^{-1}\cO_Y}f^{-1}\ccD_Y)\bigr)$. We then find for each $p\in\ZZ$:
\begin{equation}\label{eq:FfF}
\omega_Y\otimes F_{p+m-n}(\Dm f^{(j)}_*\ccM)\simeq\ccH^j\Bigl(\bR f_*F_{p-n}\pDR(\ccM\otimes_{f^{-1}\cO_Y}f^{-1}\ccD_Y)\Bigr).
\end{equation}
Let us choose $p$ as in \eqref{lem:general1}. Then
\[
F_{p-n}\pDR(\ccM\otimes_{f^{-1}\cO_Y}f^{-1}\ccD_Y)=\omega_X\otimes F_p\ccM,
\]
and the right-hand side of \eqref{eq:FfF} reads $R^jf_*(\omega_X\otimes F_p\ccM)$. Furthermore,
\[
F_{p-n-1}\pDR(\ccM\otimes_{f^{-1}\cO_Y}f^{-1}\ccD_Y)=0,
\]
and this concludes the proof.
\end{proof}

\subsubsection*{End of the proof of Corollary \ref{cor:vanishing4}}
Property \ref{lem:general}\eqref{lem:general2} holds because of Proposition \ref{prop:push}. We can thus apply the lemma to $F^\irr_{\alpha+\bbullet}\ccM$ for each $\alpha\in A$. Then, Corollary \ref{cor:vanishing3} applied to $\Dm f_*^{(j)}\cT$ and $\Dm f_*^{(j)}\ccM$ with its irregular Hodge filtration $F^\irr_\bbullet(\Dm f_*^{(j)}\ccM)$ yields the result.
\end{proof}

\Subsection{Proof of Corollaries \ref{cor:vanishing} and \ref{cor:vanishing5}}
\begin{proof}[Proof of Corollary \ref{cor:vanishing}]
We keep the assumptions and notation of Corollary~\ref{cor:vanishing} that we now prove. To the morphism $\varphi:X\to\PP^1$ one can associate (\cf\cite[Th.\,0.2]{Bibi15}) an object $\cT^{\varphi/\hb}[*D]$ of $\IrrMHM(X)$, whose underlying $\ccD_X$\nobreakdash-mod\-ule is $(\ccO_X(*D),\rd+\rd \varphi)$: it is defined as the tensor product of the object $\cT^{\varphi/\hb}$ and the mixed Hodge module $\ccO_X^\rH[*H]$, if $H$ consists of the components of $D$ not in $P$. The irregular Hodge filtration of $\ccO_X(*D)$ that is associated to it by \cite[Th.\,0.3]{Bibi15} induces on its de~Rham complex the irregular Hodge filtration considered in \cite{E-S-Y13}, according to \cite[Th.\,1.3(5)]{S-Y14}.

For any fixed $\alpha\in A$, the filtered complex $(\Omega^\cbbullet(\log D,\varphi,\alpha),\rd+\rd \varphi,\sigma)$ (where $\sigma$ denotes the filtration by stupid truncation) is filtered quasi-isomorphic to the complex $F_\alpha^{\irr,\cbbullet}\DR(\ccO_X(*D),\rd+\rd \varphi))$, according to \cite[Cor.\,1.4.5]{E-S-Y13}. We thus have
\[
\frac{F_\alpha^{\irr,p}}{F_\alpha^{\irr,p+1}}\bigl[\DR(\ccO_X(*D),\rd+\rd \varphi))\bigr]\simeq \Omega^p(\log D,\varphi,\alpha)[-p]
\]
for any $\alpha\in A$. The assertion follows then from Theorem \ref{th:vanishing}.
\end{proof}

\begin{proof}[Proof of Corollary \ref{cor:vanishing5}]
We apply Corollary \ref{cor:vanishing4} to $(\cO_X(*D),\rd+\rd\varphi)$. It follows from \cite[(1.6.2)]{E-S-Y13} that the minimal values of $F_\bbullet^\irr(\cO_X(*D),\rd+\rd\varphi)$ are equal to $\cO_X(D+\nobreak\lfloor\alpha P\rfloor)$ for $\alpha\in A$.
\end{proof}

\appendix

\section{Non-characteristic pullback in\\ mixed~twistor~\texorpdfstring{D\protect\nobreakdash-module}{Dmod} theory}

We recall that a mixed twistor $D$-module on $X$, that is, an object of $\MTM(X)$, is a $W$-filtered triple $((\cM',\cM'',C),W_\bbullet)$ satisfying various properties. Here, $\cM',\cM''$ are holonomic $\cR_\cX$-modules and, denoting by $\bS\subset\CC_\hb$ the unit circle, $C$ is a sesquilinear pairing $\cM'\otimes\sigma^*\ov{\cM''}\to\Db_{X\times\bS/\bS}$ with $\sigma$ being the involution $\hb\mto-1/\ov\hb$ and $\Db_{X\times\bS/\bS}$ the sheaf of distributions on $X\times\bS$ which depend continuously of $\hb\in\bS$. We aim at making precise the definition the strictly non-characteristic pullback of such an object, being understood that the case of the restriction to a strictly non-characteristic smooth principal divisor has been defined in \cite{Bibi01c} by means of the nearby cycle functor.

We deal with holonomic $\cR_\cX$-modules, that is, coherent $\cR_\cX$-modules whose characteristic variety is contained in a product $\Lambda\times T^*\CC_\hb$, with $\Lambda$ being a Lagrangian conic closed analytic subset of $T^*X$. The pullback of a holonomic $\cR_\cX$-module $\cM$ by a morphism $f:X'\to X$ is well-defined, and coherence is preserved if $f$ is non-characteristic with respect to $\cM$ (\cf\eg\cite[Def.\,4.6]{Kashiwara03}). Assume that $\cM$ is strict, \ie has no $\cO_{\CC_\hb}$-torsion. We say that $f$ is \emph{strictly non-characteristic with respect to~$\cM$} if it is non-characteristic and if the $\cR_X$-module pullback $\Dm f^*\cM$ has strict cohomology. Since the non-characteristic pullback $\Dm f^*\ccM$ of a holonomic $\ccD_X$-module is concentrated in degree zero, a strictly non-characteristic pullback of a strict holonomic $\cR_\cX$-module is also concentrated in degree zero.

In order to define the strictly non-characteristic pullback of a triple $(\cM',\cM'',C)$, we just need to define the pullback of the sesquilinear pairing $C$ and to show that, in the case of a smooth principal divisor, it coincides with the pullback obtained by means of nearby cycles. By writing a morphism as the composition of its graph inclusion followed by the projection, we only need to consider these two cases separately.

Let $W$ be an open subset of $X\times\bS$. A relative distribution $T\in\Db_{X\times\bS/\bS}(W)$ is by definition a $C^\infty(\bS)$-linear map $\cE^{(n,n)}_{X\times\bS/\bS,\rc}(W)\to C^0(\bS)$ which satisfies a usual continuity property with respect to the sup norm on $C^0(\bS)$.

If $f:X'\to X$ is a smooth map and $T\in\Db_{X\times\bS/\bS}(W)$, then, setting $W'=(f\times\id)^{-1}(W)$, the pullback $f^*T\in\Db_{X\times\bS/\bS}(W')$ is defined in such a way that, for $\eta\in \cE^{(n',n')}_{X\times\bS/\bS,\rc}(W')$, $f^*T(\eta)=T(\int_f\eta)$, on noting that, because $f$ is smooth, $\int_f\eta\in\cE^{(n,n)}_{X\times\bS/\bS,\rc}(W)$.

We are left with the case where $f:X'\hto X$ is a closed immersion. We recall that a relative distribution $T\in\Db_{X\times\bS/\bS}(W)$ is the limit of a sequence $T_n\in C^{\infty,0}_\rc(W)$ ($C^\infty$~with respect to the $X$-variable and continuous with respect to the $\bS$-variable), that is, for each $\eta\in\cE^{(n,n)}_{X\times\bS/\bS,\rc}(W)$, we have
\[
\lim_n\int_X T_n\cdot\eta = T(\eta)\in C^0(\bS).
\]

Set $W'=W\cap X'$. We say that \emph{$T$ can be restricted to $W'$} if there exists a relative distribution $T'\in\Db_{X'\times\bS/\bS}(W')$ such that, for any sequence $T_n\in C^{\infty,0}_\rc(W)$ converging to $T$ in $\Db_{X\times\bS/\bS}(W)$, the sequence $T_n|_{W'}\in C^{\infty,0}_\rc(W')$ converges to $T'$ in $\Db_{X'\times\bS/\bS}(W')$. If it exists, such a $T'$ is unique.

\begin{proposition}\label{prop:restrictC}
For any sections $m'\in\cM'(W)$ and $m''\in\cM''(W)$, the distribution $T=C(m',m'')\in\Db_{X\times\bS/\bS}(W)$ can be restricted to $W'$.
\end{proposition}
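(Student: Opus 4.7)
The plan is to reduce to a codimension-one strictly non-characteristic closed immersion and then to apply the Mellin-transform technique for sesquilinear pairings developed in \cite{Bibi01c}. First, since the statement is local on~$W'$, I would factor $f:X'\hto X$ near any point as a composition of closed immersions of codimension one between smooth manifolds. Strict non-charactericity is preserved step by step: at each intermediate stage the iterated pullback of $\cM'$ and $\cM''$ remains a strict holonomic $\cR$-module whose characteristic variety avoids the conormal bundle of the next hypersurface. Induction on the codimension thus reduces the problem to the case $X'=\{t=0\}$ for a local coordinate function $t$ on~$X$.

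In this local situation, strict non-charactericity translates into the statement that the Kashiwara-Malgrange $V$-filtration of $\cM^{(\prime)}$ along $t$ (with $^{(\prime)}$ denoting either $'$ or~$''$) is concentrated at the single slope $a=0$: one has $\cM^{(\prime)}=\tauV_0\cM^{(\prime)}$ locally, multiplication by~$t$ is injective, and $\gr^{\tauV}_0\cM^{(\prime)}\simeq\Dm f^*\cM^{(\prime)}$. In particular, the sesquilinear $b$-function governing the action of $t\ov t$ on $C(m',m'')$ has all of its roots concentrated (up to $\hb$-shifts) at non-positive integers, so there are no poles inside the open strip $-1<\reel(s)<0$.

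With this in hand I would define $T'$ by Mellin continuation. For any test form $\eta\in\cE^{(n-1,n-1)}_{X'\times\bS/\bS,\rc}(W')$ lifted to a compactly supported form $\tilde\eta$ on a neighborhood of $W'$ in~$W$, I would form
\[
I(s;\eta)\;=\;T\bigl(|t|^{2s}\,\tilde\eta\wedge\tfrac{\sfi}{2\pi}\rd t\wedge\rd\ov t\bigr)\;\in\;C^0(\bS),\qquad\reel(s)\gg0,
\]
which, by the $b$-function estimates combined with the Bernstein continuation argument performed uniformly in $\hb\in\bS$, extends meromorphically in $s$ to all of~$\CC$. Slope-zero concentration produces a simple pole at $s=-1$ whose residue is independent of the choice of the lift $\tilde\eta$; the assignment $\eta\mto\mathrm{Res}_{s=-1}I(s;\eta)$ then defines the sought-for relative distribution $T'\in\Db_{X'\times\bS/\bS}(W')$.

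The hard part will be checking compatibility with the prescribed smoothing description: for any sequence $T_n\in C^{\infty,0}_\rc(W)$ converging to $T$ in $\Db_{X\times\bS/\bS}(W)$, one has to show that the literal restrictions $T_n|_{W'}$ converge to this $T'$ in $\Db_{X'\times\bS/\bS}(W')$. The approach would be to apply the Mellin extension uniformly to the family $T-T_n$ and show that its residue at $s=-1$ tends to zero, leveraging joint holomorphicity of $(s,\hb)\mto I(s;\eta)$ in a neighborhood of $\{-1\}\times\bS$ together with the uniformity of the $b$-function bounds in $\hb$; the delicate point is to make the double limit (first $n\to\infty$, then $s\to-1$) legitimate in the relative-distribution framework.
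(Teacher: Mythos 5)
Your proposal follows essentially the same route as the paper: factor into codimension-one steps and induct, then handle the codimension-one case by the Mellin-transform/residue-at-$s=-1$ construction, which is precisely the nearby-cycle pairing $\psi_{t,-1}T$ of \cite[(3.6.10)]{Bibi01c} that the paper's lemma invokes. Like the paper's sketch, you also correctly isolate the remaining technical point (compatibility of the residue with the smoothing sequence $T_n$) as the step that still needs to be justified.
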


\begin{lemme}
Assume that $X=X'\times\CC_t$ and let $(\cM',\cM'',C)$ be a holonomic triple which is strictly non-characteristic along $X'=\{t=0\}$. For local sections $m',m''$ of $\cM',\cM''$ on an open subset $W\subset X\times\bS$, the relative distribution $T=C(m',m'')\in\Db_{X\times\bS/\bS}(W)$ can be restricted to $W'$ and its restriction is equal to $T'=\psi_{t,-1}T$ as defined by the residue formula \cite[(3.6.10)]{Bibi01c}.
\end{lemme}

\begin{proof}[Sketch of proof]
Let $T_n\in C^{\infty,0}_\rc(W)$ be a sequence converging to $T$ in $\Db_{X\times\bS/\bS}(W)$. One shows that the residue formula \cite[(3.6.10)]{Bibi01c} applied to $T_n$ yields $T_n|_{W'}$ and that the residue formula passes to the limit $n\to\infty$.
\end{proof}

\begin{proof}[Proof of Proposition \ref{prop:restrictC}]
The question is local as the restriction is unique if it exists. We can thus assume that $W=X'\times\Delta^r$ with coordinates $t_1,\dots,t_r$ on $\Delta^r$. We argue by induction on $r$. By the lemma, the proposition holds if $r=1$. Assume thus that $r\geq2$. Set $\iota:X'_1=\{t_1=0\}\hto X$ and $f_1:X'\hto X'_1$. Then ${X'_1}$ is strictly non-characteristic for $\cM',\cM''$, and $X'$ is strictly non-characteristic for $\Dm\iota^*_{X'_1}\cM',\Dm\iota^*_{X'_1}\cM''$. By the case $r=1$, $T$ can be restricted to ${X'_1}$ in some neighborhood of $X'$, and by induction $\iota^*_{X'_1}T$ can be restricted to $X'$. It follows that $T$ can be restricted to $X'$ and $f^*T=f_1^*\bigl(\iota^*T\bigr)$.
\end{proof}

\section{Non-integral gradings}\label{app:B}
In this section, we indicate the arguments for obtaining the assertion in Remark \ref{rem:vanishing}\eqref{rem:vanishing2}.

We extend the Rees construction to filtrations indexed by $A+\ZZ$, where $A$ is a finite subset of $[0,1)$ with $\#A=r$. We assume that $A$ contains $0$, so that we have a (unique) increasing bijection $A\simeq\{0,1/r,\dots,(r-1)/r\}$. In the following, we identify these two sets.

We consider the ring $\CC[\hu]$ with the subring $\CC[\hb]\hto\CC[\hu]$ so that $\hb$ is mapped to~$\hu^r$. The variable~$\hu$ is given the degree $1/r$. Recall that $R_X$ denotes the graded ring $R_F\ccD_X$. We~set $R_X^{(r)}=\CC[\hu]\otimes_{\CC[\hb]}R_X$. This is a $\frac1r\ZZ$-graded ring containing~$R_X$ as a $\ZZ$-graded subring, with term of degree $p/r$ given by
\[
(R_X^{(r)})_{p/r}=(R_X)_{\lfloor p/r\rfloor}.
\]
In other words, $R_X^{(r)}$ is the Rees ring of $\ccD_X$ with respect to the filtration
\[
F_p^{(r)}\ccD_X=F_{\lfloor p/r\rfloor}\ccD_X.
\]

\begin{proposition}\label{prop:Mir}
Giving a $\frac1r\ZZ$-graded $R_X^{(r)}$-module $M^{(r)}$ is equivalent to giving a~\hbox{finite} family $M_{i/r}$ ($i=0,\dots,r-1$) in $\Modgr(R_X)$ together with morphisms $M_{(i-1)/r}\to M_{i/r}$ ($i=1,\dots,r-1$) and $M_{(r-1)/r}\to M_1(1)$ such that, for each $i=0,\dots,r$, their composition $M_{i/r}\to M_{1+i/r}$ is equal to the multiplication by $\hb$. As an $R_X$-module, $M^{(r)}$ decomposes as $\bigoplus_{i=0}^{r-1}M_{i/r}\otimes u^i$.

Furthermore, $M^{(r)}$ is strict (\ie $\CC[\hu]$-flat) if and only if each $M_{i/r}$ is strict (\ie $\CC[\hb]$-flat). Lastly, $M^{(r)}$ is $R_X^{(r)}$-coherent if and only if each $M_{i/r}$ is $R_X$-coherent.
\end{proposition}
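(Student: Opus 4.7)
The plan is to construct the equivalence explicitly in both directions and then deduce the two supplementary statements by purely formal manipulations. The guiding observation is that, as a $\frac{1}{r}\ZZ$-graded left $R_X$-module, $R_X^{(r)}$ decomposes as $\bigoplus_{i=0}^{r-1} R_X\cdot\hu^i$, with $\hu^i$ placed in degree $i/r$; in particular $R_X^{(r)}$ is $R_X$-finite free of rank~$r$, with $\hu^r=\hb$.

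First I would go from a $\frac{1}{r}\ZZ$-graded $R_X^{(r)}$-module $M^{(r)}$ to the desired family as follows. Since $R_X\subset R_X^{(r)}$ acts by operators homogeneous of integer degree, the partition of degrees modulo $\ZZ$ yields $r$ $\ZZ$-graded $R_X$-submodules $M_{i/r}$ ($i=0,\dots,r-1$) defined by $(M_{i/r})_k := (M^{(r)})_{i/r+k}$, with $M^{(r)}=\bigoplus_i M_{i/r}$ as a $\ZZ$-graded $R_X$-module. Multiplication by $\hu$ is homogeneous of degree $1/r$, hence yields degree-zero morphisms $M_{(i-1)/r}\to M_{i/r}$ for $1\leq i\leq r-1$ and, crossing the integer boundary, a morphism $M_{(r-1)/r}\to M_1(1)$. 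The relation $\hu^r=\hb$ in $R_X^{(r)}$ then translates exactly into the requirement that the composite $M_{i/r}\to M_{1+i/r}$ coincides with multiplication by $\hb$.

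Conversely, from the given data $(M_{i/r})_i$ together with the transition morphisms I would build $M^{(r)} := \bigoplus_{i=0}^{r-1} M_{i/r}\otimes u^i$, placing $(M_{i/r})_k\otimes u^i$ in degree $i/r+k$, letting $R_X$ act componentwise, and letting $\hu$ act through the given morphisms (with the one for $i=r-1$ composed with the identification $M_1(1)\otimes u^0\simeq M_0\otimes u^r$, i.e.\ with $\hb$-multiplication). The compositional hypothesis is exactly what makes the action of $\hu$ satisfy $\hu^r=\hb$, and straightforward bookkeeping shows that the two constructions are mutually inverse.

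For strictness, one direction is immediate: if $\hu$ is injective on $M^{(r)}$, then so is $\hb=\hu^r$, which restricts to the injectivity of $\hb$ on each $R_X$-submodule $M_{i/r}$. For the converse, assume each $M_{i/r}$ is $\CC[\hb]$-flat and let $x\in M_{i/r}$ satisfy $\hu x=0$; then the $r$-fold composition $M_{i/r}\to M_{i/r}(1)$, being multiplication by $\hb$, sends $x$ to $\hu^{r-1}(\hu x)=0$, whence $\hb x=0$ in $M_{i/r}$, so that $x=0$. For coherence, since $R_X^{(r)}$ is $R_X$-free of finite rank, $R_X^{(r)}$-coherence of $M^{(r)}$ is equivalent to $R_X$-coherence of the underlying $R_X$-module, which is the direct sum of the $r$ summands $M_{i/r}$, hence equivalent to $R_X$-coherence of each of them. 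The argument contains no serious obstacle; the only mild subtlety, on which I would focus attention, is keeping track of the grading shift $(1)$ that appears precisely when $\hu$ crosses the integer boundary, in order to ensure the compatibility $\hu^r=\hb$.
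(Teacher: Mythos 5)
Your proof is correct and takes essentially the same route as the paper: decompose the $\frac1r\ZZ$-grading into its $r$ residue classes modulo $\ZZ$ to extract the $\ZZ$-graded $R_X$-modules $M_{i/r}$, recover $M^{(r)}$ as their direct sum with $\hu$ acting through the transition maps, and then observe that $\hu$-torsion agrees with $\hb$-torsion (for strictness) while $R_X^{(r)}$ being finite free over $R_X$ reduces coherence over $R_X^{(r)}$ to coherence over $R_X$ of each summand. Your write-up is in fact slightly more explicit about why $\hu$-injectivity follows from $\hb$-injectivity on each piece, which the paper leaves as a one-line remark.
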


\begin{proof}
For $i=0,\dots,r-1$, we consider the $\ZZ$-graded objects $M^{(r)}_{i+r\ZZ}$. These are $\ZZ$\nobreakdash-graded $R_X$-modules, that we denote by $M_{i/r}$. The morphism $\hu:M^{(r)}_j\to M^{(r)}_{j+1}$ induces the desired family of morphisms.

Conversely, from the family $M_{i/r}$ and the morphisms, we set, for $p=qr+i$ with $i\in\{0,\dots,r-1\}$, $M^{(r)}_{p/r}:=(M_{i/r})_q$ and the morphisms $M_{(i-1)/r}\to M_{i/r}$ ($i=1,\dots,r-1$) and $M_{(r-1)/r}\to M_1(1)$ are interpreted as the multiplication by $\hu$.

The flatness statement is then clear since the set of elements of $\CC[\hu]$-torsion is equal to that of $\CC[\hb]$-torsion in $M^{(r)}$, and the last statement follows \eg from \cite[Prop.\,A.10]{Kashiwara03}.
\end{proof}

If $M^{(r)}$ is $\CC[u]$-flat, it is equal to the Rees module of some $\ccD_X$-module $\ccM$ with respect to a $\frac1r\ZZ$-indexed $F^{(r)}$-filtration $F^{(r)}_\bbullet\ccM$, that we can consider as a family of nested $\ZZ$-indexed $F$-filtrations $F_{i/r+\bbullet}\ccM$ with Rees module $M_{i/r}$, \ie satisfying
\[
i/r+p\leq j/r+q\implies F_{i/r+p}\ccM\subset F_{j/r+q}\ccM\quad\forall p,q\in\ZZ,\;\forall i,j\in\{0,\dots,r-1\}.
\]
We then have
\[
\gr^{F^{(r)}}\ccM=\bigoplus_{a\in\frac1r\ZZ}F^{(r)}_a\ccM/F^{(r)}_{<a}\ccM=\bigoplus_{a\in\frac1r\ZZ}\gr^{F^{(r)}}_a(\ccM).
\]

We say that $F^{(r)}_\bbullet(\ccM)$ is a coherent $F^{(r)}$-filtration if $M^{(r)}$ is $R_X^{(r)}$-coherent, and this property is equivalent to each $F_{i/r+\bbullet}(\ccM)$ being a coherent $F$-filtration of $\ccM$.

Conversely, given a family of nested $\ZZ$-indexed $F$-filtrations $F_{i/r+\bbullet}(\ccM)$, we claim that the Rees module $R_{F^{(r)}}(\ccM)=\bigoplus_{k\in\ZZ}F^{(r)}_{k/r}(\ccM)\cdot\hu^k$ is an $R_X^{(r)}$-module. Indeed, we~need to prove that $F_{\lfloor k/r\rfloor}(\ccD_X)\cdot F^{(r)}_{\ell/r}(\ccM)\subset F^{(r)}_{(k+\ell)/r}(\ccM)$: this follows from the inequality $k+\ell\geq r\lfloor k/r\rfloor+\ell$.

\begin{lemme}\label{lem:ramifF}
For any $(\ccM,F^\irr_{\alpha+\ZZ}\ccM)$ as in Theorem \ref{th:vanishing}, there exists an isomorphism in $\catD^\rb_\coh(\ccO_X)$:
\[
\bD(\gr^{F^\irr}\pDR(\ccM))\simeq\gr^{F^\irr}\pDR(\bD\ccM).
\]
\end{lemme}

\begin{proof}
We can adapt the proof for $R_F\ccD_X$-modules as given \eg in \cite[Prop.\,8.8.31]{S-Sch}, by replacing the ring $R_F\ccD_X$ (denoted there by $\wt\ccD_X$) with $R_X^{(r)}$. We~only need to check that $M^{(r)}$ is strictly holonomic as an $R_X^{(r)}$-module, knowing that each $M_{i/r}$ is strictly holonomic as an $R_X$-module (because of Proposition \ref{prop:dual}).

We can regard $M^{(r)}$ as an $R_X$-module, and it decomposes as such as the direct sum of the $R_X$-modules $M_{i/r}$ ($i=0,\dots,r-1$), and so $\RcHom_{R_X}(M^{(r)},R_X)$ has cohomology in degree $n$ at most. The same property holds then for
\begin{align*}
\RcHom_{R_X}(M^{(r)},R_X)\otimes^\bL_{\CC[\hb]}\CC[\hu]&\simeq\RcHom_{R_X}(M^{(r)},R_X^{(r)})\\
&\simeq\RcHom_{R_X^{(r)}}(\CC[\hu]\otimes_{\CC[\hb]}M^{(r)},R_X^{(r)}).
\end{align*}
It is thus enough to check that $M^{(r)}$ is a direct summand of $\CC[\hu]\otimes_{\CC[\hb]}M^{(r)}$, \ie to find a section $M^{(r)}\to\CC[\hu]\otimes_{\CC[\hb]}M^{(r)}$ of the natural surjective morphism $\CC[\hu]\otimes_{\CC[\hb]}M^{(r)}\to M^{(r)}$.

We write
\[
\CC[\hu]\otimes_{\CC[\hb]}M^{(r)}=\bigoplus_{k\in\ZZ}\Bigl(\bigoplus_\alpha F^\irr_{\alpha+\lfloor k/r-\alpha\rfloor}(\ccM)\Bigr)\cdot u^k,
\]
where $\alpha$ runs in $\{0,1/r,\dots,(r-1)/r\}$, and the surjective morphism to $M^{(r)}$ is indu\-ced~by
\[
\bigoplus_\alpha F^\irr_{\alpha+\lfloor k/r-\alpha\rfloor}(\ccM)\to\sum_\alpha F^\irr_{\alpha+\lfloor k/r-\alpha\rfloor}(\ccM)\subset\ccM.
\]
We have $\alpha+\lfloor k/r-\alpha\rfloor\leq k/r$ with equality if and only if $\alpha=\{k/r\}$ (fractional part), so that the right-hand side above is equal to $F^\irr_{k/r}(\ccM)$. Since $F^\irr_{k/r}(\ccM)$ is a summand in the left-hand side, we obtain the desired section.
\end{proof}

That the statement of Theorem \ref{th:KSproperty} holds when we replace coherent $F_\bbullet\ccD_X$-filtrations with coherent $F^{(r)}_\bbullet\ccD_X$-filtrations is obtained by applying the same method as in \cite[\S11.9.d]{S-Sch}, since Lemma \ref{lem:ramifF} reduces the proof to the vanishing of $H^i(X,\gr^{F^{(r)}}\pDR(\ccN)\otimes L)$ for $i>0$ and $\ccN=\ccM$ or $\bD\ccM$.\qed

\begin{proof}[Proof of Remark \ref{rem:vanishing}\eqref{rem:vanishing3}]
We first note that Assumption \eqref{lem:general2} of Lemma ~\ref{lem:general} also implies that \eqref{eq:FfF} also holds with $\gr^F$ instead of $F$. The irregular Hodge filtration indexed by $A+\ZZ$ satisfies Assumption \eqref{lem:general2} of Lemma ~\ref{lem:general} when the latter is extended to such kinds of filtrations by means of the equivalence of Proposition \ref{prop:Mir}. The previous remark applies then to the graded objects $\gr^{F^\irr}_a$ for $a\in A+\ZZ$.

Then for $a\in[a_o,a_o+1)$, we find
\[
\omega_Y\otimes \gr^{F^\irr}_{a+m-n}(\Dm f^{(j)}_*\ccM)\simeq R^jf_*(\omega_X\otimes \gr^{F^\irr}_a\ccM).
\]
We conclude by applying Corollary \ref{cor:vanishing3}, as we already noticed that one can replace~$F^\irr_a$ with $\gr^{F^\irr}_a$ in this corollary.
\end{proof}

\backmatter
\providecommand{\SortNoop}[1]{}\providecommand{\sortnoop}[1]{}\providecommand{\eprint}[1]{\href{http://arxiv.org/abs/#1}{\texttt{arXiv\string:\allowbreak#1}}}\providecommand{\hal}[1]{\href{https://hal.archives-ouvertes.fr/hal-#1}{\texttt{hal-#1}}}\providecommand{\tel}[1]{\href{https://hal.archives-ouvertes.fr/tel-#1}{\texttt{tel-#1}}}\providecommand{\doi}[1]{\href{http://dx.doi.org/#1}{\texttt{doi\string:\allowbreak#1}}}\providecommand{\didotfam}{}
\providecommand{\bysame}{\leavevmode ---\ }
\providecommand{\og}{``}
\providecommand{\fg}{''}
\providecommand{\cdrandname}{\&}
\providecommand{\cdredsname}{\'eds.}
\providecommand{\cdredname}{\'ed.}
\providecommand{\cdrmastersthesisname}{M\'emoire}
\providecommand{\cdrphdthesisname}{Th\`ese}
\providecommand{\eprint}[1]{\href{http://arxiv.org/abs/#1}{\texttt{arXiv\string:\allowbreak#1}}}
\providecommand{\eprintother}[3]{\href{#1/#2}{\texttt{#3\string:\allowbreak#2}}}

\end{document}